\tikzstyle{vertex}=[circle, draw, inner sep=0pt, minimum size=6pt]
\def \Z {{\mathbb{Z}}}
\newtheorem*{theorem*}{Theorem}
\newtheorem{theorem}{Theorem}[section]
\newtheorem{cor}[theorem]{Corollary}
\newtheorem{conj}[theorem]{Conjecture}
\newtheorem{lemma}[theorem]{Lemma}
\newtheorem{ex}[theorem]{Example}
\newtheorem{definition}[theorem]{Definition}
\newtheorem{qu}[theorem]{Question}
\newtheorem{rem}[theorem]{Remark}
\newtheorem*{ex*}{Example}
\newtheorem{pro}[theorem]{Proposition}
\begin{document}

\title{A note on words having the same image on finite groups}

\author{Shrinit Singh}
\address{Department of Mathematics, Shiv Nadar Institution of Eminence, Greater Noida, Dadri-201314, India }
\email{ss101@snu.edu.in, shrinitsingh@gmail.com}

\subjclass[2020]{20E05, 20E18, 20F10, 20P05}
\keywords{Free Group, Word Map, Profinite Group}

\begin{abstract}
In this work, we explore the following question: If two words in a finitely generated free group have identical images as word maps on every finite group, must they be endomorphic to each other? In this regard, we introduce weak profinite rigidity for words, a parallel to profinite rigidity, as defined in \cite{hanany2020some}. We establish that the powers of primitive words in any finitely generated free group $F_n$ are weakly profinitely rigid. Furthermore, if a word in $F_n$ has the same image on every finite group as a test word in $F_n$, then both words induce the same probability measure on every finite group. We also prove that a test word in $F_n$ is weakly profinitely rigid if and only if it is profinitely rigid. As a consequence, we establish that the powers of surface words, i.e., $(x_1^2\ldots x_n^2)^d$ in $F_n$ and $([x_1,x_2]\ldots [x_{2n-1},x_{2n}])^d$ in $F_{2n}$, for $n \geq 1$ and any integer $d$, are weakly profinitely rigid.

\end{abstract}
\maketitle

\section{Introduction}

Let $F_n = \langle x_1, \ldots, x_n \rangle$ be a free group on $n$ generators. An element $w$ of $F_n$ is said to be a word and has an expression $\prod_{j=1}^s x_{i_j}^{a_j}$, where $i_j \in \{ 1,2, \ldots, n\}, a_j \in \Z\setminus \{0\}$ and for every $j \in \{1,2, \ldots, s-1\},$ $x_{i_j} \neq x_{i_{j+1}}.$ For any group $G$, we use $G^{(n)}$ to denote the direct product of $n$ copies of $G.$ Corresponding to the word $w \in F_n$ as above, we associate a word map $w$ on a group $G$ to be a map from $G^{(n)}$ to $G$ defined as $w(g_1,\ldots,g_n) = \prod_{j=1}^s g_{i_j}^{a_j}.$ The image of the word map $w$ on $G$ is denoted by $G_w.$

Let $G$ be a finite group. A word $w \in F_n$ also induces a probability measure on the finite group $G.$ The probability of an element $g \in G$ induced by the word $w$, denoted by $\mathrm{Pr}_w(g)$, is $$\frac{|(g_1,g_2,\ldots,g_n) \in G^{(n)} \mid w(g_1,g_2,\ldots,g_n) = g|}{|G^{(n)}|}.$$
 We say that $w_1$ and $w_2$ in $F_n$ induce the same probability measure on $G$ if  $\mathrm{Pr}_{w_1}(g) = \mathrm{Pr}_{w_2}(g)$ for every $g \in G$. Otherwise, we say that they induce different probability measures. Two words $w_1$ and $w_2$ in $F_n$ are called {\it automorphic} if there exists an automorphism $\phi$ of the free group $F_n$ such that $\phi(w_1) = w_2.$ It is easy to note that one cannot distinguish automorphic words just based on their induced probability measures on finite groups. Following \cite{hanany2020some}, we say that a word $w \in F_n$ is {\it profinitely rigid} if every word $u\in F_n$ that induces the same probability measure as $w$ on every finite group is automorphic to $w$.

 Finite groups have been studied using word maps \cite{Bray,cockeho,Segal}. On the other hand, one can talk about the relations among words if the corresponding word maps on every finite group behave in a similar manner. In this direction, the following conjecture by Amit--Vishne and independently by Shalev is well known.  

\begin{conj}\cite{amit2011characters,shalev2013some} \label{shalev}
Every word in $F_n$, for $n \geq 2$, is profinitely rigid.
\end{conj}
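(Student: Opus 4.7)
The statement is the long-standing Amit--Shalev profinite rigidity conjecture, which has resisted a full proof for over a decade; any sketch I give is necessarily aspirational rather than routine. Still, the two results announced in the abstract suggest a concrete two-step strategy, and I would try to push them as far as they go.

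The first step would be to reduce the measure-equivalence hypothesis to the weaker image-equivalence hypothesis. The paper's second main theorem already bridges these in one direction when one of the two words is a test word (same image on every finite group $\Rightarrow$ same measure). I would try to show that a word is ``profinitely test-like'' whenever its measure equals that of $w$ on every finite group, by iterating the test-word argument over a directed system of finite quotients (for instance symmetric groups $S_n$, nilpotent $p$-groups of bounded class, and $\mathrm{PSL}_2(\F_q)$-families) and patching the local fiber information via inverse limits. If successful, this reduces profinite rigidity to \emph{image rigidity up to endomorphism}, which is the setting the paper treats directly.

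The second step would be to bootstrap the paper's first result beyond the two base cases $x^n$ and $[x,y]^n$. Here I would attempt an induction on some notion of word complexity (length after cyclic reduction, Whitehead complexity, or stable commutator length). The base cases correspond to words whose image is a single conjugacy class or lies in the commutator subgroup, which is why the author can argue directly; for a general $w$ I would try to strip off a Whitehead automorphism at each step, using the observed image on a well-chosen finite group to detect the automorphism, and then conclude by the induction hypothesis. The endomorphic rather than automorphic conclusion of the paper's first result would have to be upgraded, presumably by using the freeness of $F_d$ together with a rank-preservation argument derived from image data on nilpotent quotients.

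The main obstacle, and the reason the conjecture is still open, is the absence of any known invariant of $w \in F_d$ that is both (i) computable from the profinite measure class of $w$ and (ii) a complete invariant under $\mathrm{Aut}(F_d)$. The paper exploits the very rigid structure of $x^n$ and $[x,y]^n$, namely that their images are either cyclic or lie in a proper verbal subgroup, so that abelian and metabelian test groups already pin the word down. For a generic word one must control the fibers of $w$ above arbitrary elements of non-solvable groups, and current character-theoretic and Fourier-analytic techniques (Frobenius-type formulas, Witten zeta bounds) only give asymptotic control rather than the exact fiber equalities one would need. Any realistic proof proposal therefore has to either introduce a new invariant of this kind or recast the problem in a Galois/pro-finite setting where automorphism classes of $F_d$ become visible.
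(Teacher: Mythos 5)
This statement is not a theorem of the paper: it is the Amit--Shalev conjecture, quoted from \cite{amit2011characters,shalev2013some}, and the paper neither proves it nor claims to. There is therefore no proof to compare yours against, and your own text candidly concedes that what you have written is a research programme rather than a proof. As a proof attempt it has genuine gaps at every load-bearing point. Your first step is partly inverted: if two words induce the same measure on every finite group then they trivially have the same image (the supports of the two measures coincide), so the passage from measure data to image data is the easy direction and needs no ``profinitely test-like'' machinery; the paper's Theorem \ref{main2} goes the other way (same image implies same measure, for test words), and neither direction touches the actual difficulty. That difficulty is the promotion of endomorphic equivalence (or of equivalence in $\hat{F_d}$) to automorphic equivalence in $F_d$, which the paper achieves only for two special classes: test words, via Turner's retract characterisation and its profinite analogue of Snopce--Tanushevski, and primitive words, by citing Puder--Parzanchevski. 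Your proposed ``induction on Whitehead complexity, stripping off a Whitehead automorphism detected by the image on a well-chosen finite group'' is precisely the missing idea, not an argument: no invariant computable from finite-group data is known that detects a single Whitehead move, and you give none.

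Two smaller inaccuracies are worth flagging. The paper's results on $x^n$ and $[x,y]^n$ concern \emph{weak} profinite rigidity (same image implies endomorphic equivalence), which is a different and a priori incomparable property to the profinite rigidity of the conjecture (same measure implies automorphic); the profinite rigidity of $x^n$ and $[x,y]^n$ is due to Hanany--Meiri--Puder, not to this paper. And your description of the base cases (``image is a single conjugacy class or lies in the commutator subgroup'') does not reflect how the paper handles them: the relevant dichotomy, via Lemma \ref{turner1}, is that a word in $F_2$ is either a power of a surjective word or a test word, and the two branches are treated by entirely different arguments.
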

 Although the conjecture is still open in general for words in $F_n$, $n \geq 2$, there have been recent developments in its favour. In 2015, Puder and Parzanchevski \cite{puder2015measure} proved that primitive words in any free group are profinitely rigid. In 2020, Hanany, Meiri, and Puder \cite{hanany2020some} proved that $x_1^d$ and $[x_1, x_2]^d$, where $d \in \Z$, are profinitely rigid in every free group $F_n$ for $n \geq 2$. In 2021, Wilton \cite{wilton2021profinite} proved that surface words,  i.e., $x_1^2\ldots x_n^2$ in $F_n$ and $[x_1,x_2]\ldots [x_{2n-1},x_{2n}]$ in $F_{2n}$ for $n \geq 1$, are profinitely rigid.     

In this article, we examine the image of a word map on every finite group while disregarding the probability measure induced by these words. For example,
let $w_1 = x_1$ and $w_2 = x_1^2x_2x_1^{-1}x_2^{-1}$ be two words in $F_2$. They have the same image in every group, but they induce different probability measures on the symmetric group $S_3$. By \cite{puder2015measure}, we get that $w_1$ and $w_2$ are not automorphic. The primary objective of this article is to initiate a study on the relationship among words that have the same image on every finite group.

The simplest case is to establish a relationship among words whose word maps are surjective on every finite group. We call a word $w \in F_n$ \textit{surjective} if $G_w = G$ for every group $G$. 

In this regard, we have the following classification of surjective words in $F_n$.
\begin{theorem}\cite[Theorem~3.1.1]{segal2009words}\label{segal}
    Let $w \in F_n$ and $F_n'$ be the commutator subgroup of $F_n$. Then the following are equivalent:
    \begin{enumerate}
        \item $w$ is surjective;
        \item there exist integers $k_1, k_2, \ldots k_n$ with $\gcd(k_1, k_2, \ldots, k_n) = 1$ such that $$w \in {x_1}^{k_1}{x_2}^{k_2}\ldots {x_n}^{k_n}{F_n}';$$
        \item $G_w = G$ for every finite group $G$;
        \item $G_w = G$ for every finite cyclic group $G$.
    \end{enumerate}
\end{theorem}
Segal has mentioned the equivalence of the first two statements. We have extended this result by adding the last two statements. The equivalence of the last three statements follows easily and is provided in the final section for completeness (see Theorem \ref{prim}).
\begin{definition}\label{def_end}
    Two elements $g$ and $h$ in a group $G$ are endomorphic equivalent if there exist  endomorphisms $\phi_1$ and $\phi_2$ (not necessarily distinct) of $G$ such that $\phi_1(g) = h$ and $\phi_2(h) = g.$ 
\end{definition}

Using Theorem \ref{segal}, we observe that any two surjective words are endomorphic equivalent as follows. Let $w = {x_1}^{k_1}{x_2}^{k_2}\ldots {x_n}^{k_n}c \in F_n$, where $c \in F_n'$ and $\gcd(k_1,k_2\ldots,k_n)=1$, be a surjective word. For any surjective word $u \in F_n$, we define an endomorphism $\phi$ of $F_n$ by mapping $x_i$ to ${u}^{e_i}$ for $i \in \{1,2,\ldots, n\}$, where $e_i \in \mathbb{Z}$ such that $\sum_{i=1}^ne_ik_i = 1$. It is straightforward to verify that $\phi(w) = u$. Similarly, there exists an endomorphism $\psi$ of $F_n$ such that $\psi(u) = w$.

Building on this observation, we present the following lemma, whose proof will be provided in the next section.

\begin{lemma}\label{endoequi}
    Two words $w_1$ and $w_2$ in a free group $F_n$ are endomorphic equivalent if and only if they have the same image as word maps on every group. 
\end{lemma}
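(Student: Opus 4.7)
\medskip

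\noindent\textbf{Proof plan.} The statement is essentially a direct consequence of the universal property of the free group, and the argument splits cleanly into the two implications.

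For the forward direction, I would assume $\phi_1,\phi_2\in\operatorname{End}(F_d)$ satisfy $\phi_1(w_1)=w_2$ and $\phi_2(w_2)=w_1$. Given an arbitrary group $G$ and a tuple $(g_1,\ldots,g_d)\in G^d$, let $\psi\colon F_d\to G$ be the unique homomorphism with $\psi(x_i)=g_i$. Then $\psi\circ\phi_1$ is again a homomorphism $F_d\to G$, so
\[
w_2(g_1,\ldots,g_d)=\psi(w_2)=\psi(\phi_1(w_1))=w_1\bigl(\psi\phi_1(x_1),\ldots,\psi\phi_1(x_d)\bigr)\in G_{w_1}.
\]
This shows $G_{w_2}\subseteq G_{w_1}$, and the reverse containment follows by exactly the same argument using $\phi_2$.

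For the converse, the idea is to feed the hypothesis the most informative group available, namely $G=F_d$ itself. Observe that $w_1\in G_{w_1}$ trivially, since evaluating $w_1$ at $(x_1,\ldots,x_d)$ returns $w_1$. By assumption $G_{w_1}=G_{w_2}$, hence $w_1\in G_{w_2}$; that is, there exist $v_1,\ldots,v_d\in F_d$ with $w_2(v_1,\ldots,v_d)=w_1$. The endomorphism $\phi_2\in\operatorname{End}(F_d)$ determined by $\phi_2(x_i)=v_i$ then satisfies $\phi_2(w_2)=w_1$. Swapping the roles of $w_1$ and $w_2$ produces the endomorphism $\phi_1$ with $\phi_1(w_1)=w_2$, completing the equivalence.

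There is no real obstacle here: the only conceptual point is the reduction in the converse direction to $G=F_d$, where the image of the word map on $F_d^d$ is precisely the $\operatorname{End}(F_d)$-orbit of the word. In particular no finiteness of $G$ is required, even though the rest of the paper restricts to finite groups; restricting to finite $G$ would genuinely weaken the hypothesis and break this easy argument.
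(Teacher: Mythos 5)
Your proof is correct and follows essentially the same route as the paper: the forward direction by composing the evaluation homomorphism with the given endomorphisms, and the converse by specializing to $G=F_d$ and invoking the universal property of the free group. The only cosmetic difference is that the paper factors the argument through a one-sided lemma (a single endomorphism $\phi$ with $\phi(w)=w'$ exists iff $G_{w'}\subseteq G_w$ for every $G$) and deduces the lemma by symmetry, whereas you run both containments simultaneously.
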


In light of Lemma \ref{endoequi} and Theorem \ref{segal}, we raise the natural question: For $w_1, w_2 \in F_n$, is it necessarily true that $w_1$ and $w_2$ are endomorphic equivalent whenever they have the same image on every finite group? In this regard, we define weak profinite rigidity as a natural analogue of profinite rigidity.

\begin{definition}\label{wpr1}
    A word $w \in F_n$ is said to be weakly profinitely rigid if every word $u\in F_n$ that has the same image as $w$ on every finite group is endomorphic equivalent to $w$.
\end{definition}
In the free group $F_1 = \langle x_1 \rangle$, any two words $x_1^l$ and $x_1^k$ will have the same image on every finite group if and only if $l = \pm k$. Clearly, $x_1^l$ and $x_1^{-l}$ are automorphic. Hence, every word in $F_1$ is weakly profinitely rigid. Using Theorem \ref{segal} and the observation that two surjective words are endomorphic equivalent, we have also established weak profinite rigidity of surjective words in a finitely generated free group.

However, it is worth mentioning that the profinite rigidity of a non-primitive surjective word is not known. We now ask a question in the spirit of Conjecture \ref{shalev}.
\begin{qu}
    Is every word in $F_n$, for $n \geq 2$, weakly profinitely rigid?
\end{qu}

In this article, we establish weak profinite rigidity for the powers of surjective words in any finitely generated free group, as well as for the powers of surface words, i.e., $(x_1^2\ldots x_n^2)^d$ in $F_n$ and $([x_1,x_2]\ldots [x_{2n-1},x_{2n}])^d$ in $F_{2n}$ for $n \geq 1$ and any integer $d$. 

 We remark that a positive answer to the above question would affirmatively resolve \cite[Question~18]{Singh} implying that for a chiral word, there exists a finite group on which the word will exhibit chirality. More generally, it would establish that any two words can be distinguished up to endomorphic equivalence if their images as word maps on some finite
group are different.

We now discuss the organization of this article.

In the next section, we generalize the concept of weak profinite rigidity to elements of arbitrary finitely generated groups. We then establish the equivalence of the statement ``Two elements in a finitely generated group $G$ have the same image as word maps on every finite group'' with the assertion that the natural images of these elements in the profinite completion of $G$ are endomorphic equivalent.

In the third section, we examine a word in a free (free profinite, respectively) group that remains fixed under any (continuous, respectively) endomorphism, which then necessarily becomes a (continuous, respectively) automorphism. Such words are known as test words. We demonstrate that the test words in a free group remain test words in its profinite completion, where the free group is identified with its natural image in its profinite completion.

In the final section, we explore weak profinite rigidity among words in free groups. We prove that the powers of surjective words are weakly profinitely rigid. For a given test word $w \in F_n$, we establish that for any word $u \in F_n$ having the same image as $w$ on every finite group, $u$ induces the same probability measure on every finite group as $w$. We also show that the notion of weak profinite rigidity coincides with that of profinite rigidity for test words. As a result, we get that the powers of surface words,  i.e., $(x_1^2\ldots x_n^2)^d$ in $F_n$ and $([x_1,x_2]\ldots [x_{2n-1},x_{2n}])^d$ in $F_{2n}$, for $n \geq 1$ and any integer $d$, are weakly profinitely rigid.

\section{Weak Profinite Rigidity and its equivalent notions}

We generalize the notion of word maps to elements of an arbitrary finitely generated group.

Consider any $n$-tuple $g_1, \ldots,g_n$ in $G$. There is a homomorphism $\phi$ from $F_n$ to $G$ defined by $\phi(x_i) = g_i$ for $1 \leq i \leq n$. Conversely, from the universal property of free groups, any such homomorphism $\phi$ from $F_n$ to $G$ corresponds to a $n$-tuple in $G$. For $w \in F_n,$ we have $w(g_1,\ldots,g_n) = \phi(w).$ In fact $G_w$, the image of the word map $w$ on $G$, is $ \{ \phi(w) \mid \phi \in \mathrm{Hom}(F_n, G)\}$, where $\mathrm{Hom}(F_n,G)$ is the set of all homomorphisms from $F_n$ to $G$. This depiction of word maps will enable us to extend the notion of word maps to elements in any finitely generated groups.

Let $P$ be a finitely generated group and $G$ any group. We use $\mathrm{Hom}(P,G)$ to denote the set of all homomorphisms from $P$ to $G$. For any element $\gamma \in P$, there is a corresponding word map $\gamma$ on $G$. It is a map from $\mathrm{Hom}(P,G)$ to $G$ as follows $\gamma: \phi \mapsto \phi(\gamma)$ where $\phi \in \mathrm{Hom}(P,G)$. We denote the image of $\gamma$ on $G$ by $G_{\gamma}$ and observe that $G_{\gamma} = \{ \phi(\gamma) \mid \phi \in \mathrm{Hom}(P,G)\}.$ For an element $g$ in a finite group $G$, we denote the probability of $g \in G$ induced by an element $\gamma \in P$ by $\mathrm{Pr}_{\gamma}(g)$. In fact, $\mathrm{Pr}_{\gamma}(g)$ can be defined as 

$$\mathrm{Pr}_{\gamma}(g) = \frac{|\phi \in \mathrm{Hom}(P,G) \mid \phi(\gamma) = g |}{|\mathrm{Hom}(P,G)|},$$ which generalizes the probability induced by a word in a free group on a finite group (see \cite{hanany2020some}). Now we prove Lemma \ref{endoequi} which follows as a straightforward corollary of the next lemma.
\begin{lemma}\label{fg}
    Let $P$ be a finitely generated group. Then $\gamma_1, \gamma_2 \in P$ are endomorphic equivalent if and only if  $G_{\gamma_1} = G_{\gamma_2}$ for any group $G.$
\end{lemma}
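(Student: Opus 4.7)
The plan is to mirror the argument used for the previous lemma about $F_d$, observing that the role played there by the universal property of the free group is replaced here by the trivial observation that $P$ itself is among the groups $G$ we are allowed to test against. Once one unpacks the definition $G_\gamma = \{\phi(\gamma) \mid \phi \in \mathrm{Hom}(P,G)\}$, the statement is essentially formal, so I do not expect a genuine obstacle.

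For the forward direction, I would start from endomorphisms $\phi_1, \phi_2 \in \mathrm{End}(P)$ with $\phi_1(\gamma_1) = \gamma_2$ and $\phi_2(\gamma_2) = \gamma_1$. Given any group $G$ and any $g \in G_{\gamma_2}$, write $g = \psi(\gamma_2)$ for some $\psi \in \mathrm{Hom}(P,G)$; then $\psi \circ \phi_1 \in \mathrm{Hom}(P,G)$ sends $\gamma_1$ to $g$, so $g \in G_{\gamma_1}$. The reverse inclusion is symmetric, giving $G_{\gamma_1} = G_{\gamma_2}$.

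For the converse, the key move is to specialise $G = P$. Since the identity map is a homomorphism $P \to P$, we have $\gamma_2 \in P_{\gamma_2}$, and the hypothesis $P_{\gamma_1} = P_{\gamma_2}$ then produces some $\phi_1 \in \mathrm{Hom}(P,P) = \mathrm{End}(P)$ with $\phi_1(\gamma_1) = \gamma_2$. Swapping the roles of $\gamma_1$ and $\gamma_2$ gives an endomorphism $\phi_2$ with $\phi_2(\gamma_2) = \gamma_1$, witnessing endomorphism equivalence.

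No step should be hard; the only conceptual content is noticing that taking $G = P$ is exactly the substitute for the universal property of $F_d$ invoked in the free group lemma, since the target of $\phi_1$ on a generating set (which was encoded there by choosing words $w_i$) is here already determined by the homomorphism $\psi$ that comes from membership in $P_{\gamma_1}$. The only care one has to take is not to require $P$ to be free or even residually finite: the finite generation of $P$ is not actually used in this equivalence, but the statement is phrased this way because it is the setting in which the sequel will apply it.
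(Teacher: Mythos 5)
Your proposal is correct and follows essentially the same argument as the paper: composing the given endomorphisms with an arbitrary homomorphism to $G$ for the forward direction, and specialising to $G = P$ with the identity map to extract the required endomorphisms for the converse. No issues.
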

\begin{proof}
    Suppose there exists an endomorphism $\chi$ of $P$ such that $\chi(\gamma_1) = \gamma_2.$ If $\phi(\gamma_2),$ for some $\phi \in \mathrm{Hom}(P,G),$ is in the image of $\gamma_2$, then $\phi \circ \chi (\gamma_1) = \phi(\gamma_2)$. Hence, we get $G_{\gamma_2} \subseteq G_{\gamma_1}.$ Similarly, we can show  $G_{\gamma_1} \subseteq G_{\gamma_2}.$  For the converse, consider $G = P$. Let $\phi$ be the identity automorphism of $P.$ Then $\gamma_2 = \phi(\gamma_2) \in P_{\gamma_2} = P_{\gamma_1}.$ Therefore, there exists $\chi \in \mathrm{Hom}(P,P)$ with $\chi(\gamma_1) = \gamma_2$. Similarly, there exists $\chi' \in \mathrm{Hom}(P,P)$ such that $\chi'(\gamma_2) = \gamma_1$.
\end{proof}

We generalize Definition \ref{wpr1} of weak profinite rigidity to elements of finitely generated groups.

\begin{definition}\label{def1}
    An element $\gamma$ in a finitely generated group $P$ is said to be weakly profinitely rigid if every element $\beta \in P$ with $G_{\gamma}=G_{\beta}$ for every finite group $G$ is endomorphic equivalent to $\gamma$.
\end{definition}

We aim to study the problem of whether two elements in a finitely generated group (in particular, a free group of finite rank) that have the same image on every finite group will be endomorphic equivalent. To restrict to the class of finite groups, we give the following lemma.

 \begin{lemma}
    Let $\gamma_1, \gamma_2$ be elements in a finitely generated group $P$.  The word maps corresponding to $\gamma_1$ and $\gamma_2$ have the same image on every finite group if and only if they have the same image on every finite quotient of $P$.
 \end{lemma}

\begin{proof}
    We only need to prove the converse, as the forward implication is obvious. Let $G$ be any finite group and $g \in G_{\gamma_1}$,  i.e., there exists $\phi \in \mathrm{Hom}(P,G)$ such that $\phi(\gamma_1) = g$. Now $\phi(P)$ is a finite quotient of $P$ and a subgroup of $G$. Let $i : \phi(P) \longrightarrow G$ be the inclusion. Identify elements of $\phi(P)$ with elements of $G$ via map $i$. Since $g \in \phi(P)_{\gamma_1}$ by restricting the image of $\phi$ to $\phi(P)$, there exists an $\eta \in \mathrm{Hom}(P,\phi(P))$ such that $\eta(\gamma_2) = g$. Hence, $i \circ \eta \in \mathrm{Hom}(P,G)$ such that $i\circ \eta (\gamma_2) =  g$. Therefore, $g \in G_{\gamma_2}$.  
\end{proof}
It suffices to consider the weak profinite rigidity of an element in a finitely generated group only within its finite quotient. Profinite completion is a natural tool for capturing information of a group's finite quotients.

Now, we provide a concise overview of the profinite completion of a group. For any group $P$, the basis for the profinite topology on $P$ consists of the left cosets of subgroups of finite index. The profinite completion of $P$, denoted by $\widehat{P}$, is the inverse limit of its finite quotients. A normal subgroup $N$ of finite index in $P$ is denoted by $N \unlhd_{\mathrm{f.i.}} P.$ Formally, we write
$$\widehat{P} = \varprojlim_{N \unlhd_{\textrm{f.i.}} P} P/N.$$

The group $\widehat{P}$ is equipped with the inverse limit topology, where each finite quotient $P/N$ is regarded as a discrete topological group. With this topology, $\widehat{P}$ is a compact, Hausdorff, and totally disconnected topological group. There is a natural homomorphism $i : P \longrightarrow \widehat{P}$ defined by  $$\gamma = (\gamma N)_{N \unlhd_{\mathrm{f.i.}} P}.$$ 

Let $G$ be a finite group endowed with the discrete topology. By definition, we take homomorphisms from profinite groups to $G$ to be continuous. For our purpose, we don't need to assume continuity as all homomorphisms are continuous when $P$ is finitely generated. Let ${\mathrm{Hom}}(\widehat{P},G)$ be the set of all continuous homomorphisms from $\widehat{P}$ to $G.$ By the universal property of the profinite completion, there is a bijection between $\mathrm{Hom}(P,G)$ and $\mathrm{Hom}(\widehat{P},G).$  To be more precise, for every homomorphism $\phi \in \mathrm{Hom}(P,G)$ there exists a unique  $\widehat{\phi} \in \mathrm{Hom}(\widehat{P},G)$ such that $\phi = \widehat{\phi} \circ i$ \cite[Lemma 3.2.1]{ribes2000profinite}.

Let $\gamma \in P$ and $g \in G$ where $P$ is a finitely generated group and $G$ is a finite group. We define 
\begin{align*}
   \mathrm{Hom}_{\gamma,g}(P,G) &:= \{ \phi \in \mathrm{Hom}(P,G) \mid \phi(\gamma) = g \}, \\
    K_P(G) &:= \bigcap_{N \unlhd_{\mathrm{f.i.}} P, \frac{P}{N} \cong H \leq G} N, \\
    J_P(G) &:= \bigcap_{\phi \in \mathrm{End}(P)} \phi^{-1}(K_P(G)).
\end{align*}

We note that the subgroup $K_P(G)$ was introduced specifically to define $J_P(G)$.
We claim that $J_P(G)$ is a fully invariant subgroup of $P$ of finite index. Since $P$ is finitely generated, there are finitely many subgroups of any given finite index. Consequently, $K_P(G)$ is clearly a normal subgroup of $P$ of finite index. 

For any $\phi \in \mathrm{End}(P)$, consider the map 
$$\chi : P / \phi^{-1}(K_P(G)) \longrightarrow P / K_P(G)$$ 
defined by $\chi(p\phi^{-1}(K_P(G))) = \phi(p)K_P(G).$ It is straightforward to verify $\chi$ is a well-defined injective homomorphism. Therefore, $[P:  \phi^{-1}(K_P(G))] \leq [P : K_P(G)]$, which implies that there are only finitely many possibilities for distinct $\phi^{-1}(K_P(G))$. Thus, $J_P(G)$, being the intersection of finitely many normal subgroups of finite index in $P$, is a normal subgroup of $P$ of finite index. Moreover, the construction of $J_P(G)$ in $P$ ensures that it is fully invariant in $P$.

When we talk about endomorphism in the case of a profinite group, we mean a continuous endomorphism. Now we provide an analogous theorem of \cite[Theorem~2.2]{hanany2020some}.
\begin{theorem}\label{main}
    Let $P$ be a finitely generated group and $\gamma_1, \gamma_2 \in P.$ Then the following are equivalent:
    \begin{enumerate}
        \item $i(\gamma_1)$ and $i(\gamma_2)$ are endomorphic equivalent  in $\widehat{P}$.
        \item For every finite group $G$, and every $g \in G$, the set $\mathrm{Hom}_{\gamma_1,g}(P,G)$ is non-empty if and only if $\mathrm{Hom}_{\gamma_2,g}( P,G)$ is non-empty.
        \item $\gamma_1J$ and $\gamma_2J$ are endomorphic equivalent in $P/J,$ where $J= J_P(G)$ for every finite group $G.$
        \item For every $N \unlhd _\mathrm{f.i.}P,$ there exists $J \unlhd_\mathrm{f.i.} P$ with $J \leq N$ such that $\gamma_1J$ and $ \gamma_2J$ are endomorphic equivalent in $P/J.$
    \end{enumerate}
\end{theorem}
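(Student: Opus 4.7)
My plan is to establish the equivalences cyclically via $(1) \Rightarrow (2) \Rightarrow (3) \Rightarrow (4) \Rightarrow (1)$. The first implication is direct: given a continuous endomorphism $\hat\phi$ of $\hat P$ with $\hat\phi(i(\gamma_1)) = i(\gamma_2)$ and any $\theta \in \mathrm{Hom}_{\gamma_2,g}(P,G)$, the composition $\hat\theta \circ \hat\phi \circ i : P \to G$ sends $\gamma_1$ to $g$, and the reverse inclusion uses the other witnessing endomorphism.

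For $(2) \Rightarrow (3)$ I first observe that $J_P(G) = K_P(G)$: since $\theta \circ \psi \in \mathrm{Hom}(P,G)$ for every $\psi \in \mathrm{End}(P)$ and $\theta \in \mathrm{Hom}(P,G)$, the subgroup $K_P(G)$ is already fully invariant, so $\psi^{-1}(K_P(G)) \supseteq K_P(G)$ for all $\psi$, while $\psi = \mathrm{id}$ gives the reverse inclusion on intersection. Setting $J := J_P(G) = K_P(G)$ and applying $(2)$ to $P/J$ --- the projection $\pi_J$ lies in $\mathrm{Hom}_{\gamma_2,\gamma_2 J}(P,P/J)$ --- yields $\phi \in \mathrm{Hom}(P,P/J)$ with $\phi(\gamma_1) = \gamma_2 J$. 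The key claim is $\ker\phi \supseteq J$: the quotient $P/J = P/K_P(G)$ embeds into a finite product $\prod_i P/N_i$ with each $P/N_i \leq G$, so $\phi$ decomposes as a tuple of homomorphisms $P \to G$, each with kernel containing $K_P(G) = J$; injectivity of $P/J \hookrightarrow \prod_i P/N_i$ identifies $\ker\phi$ with the intersection of these kernels. Thus $\phi$ descends to $\alpha \in \mathrm{End}(P/J)$ with $\alpha(\gamma_1 J) = \gamma_2 J$, and the symmetric construction yields $\beta$. For $(3) \Rightarrow (4)$, given $N \unlhd_{\mathrm{f.i.}} P$ I take $G = P/N$: since $N$ itself occurs in the intersection defining $K_P(P/N)$, we have $J := J_P(G) \leq N$, and $(3)$ supplies the endomorphic equivalence of $\gamma_1 J, \gamma_2 J$ in $P/J$.

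The main technical step will be $(4) \Rightarrow (1)$, which I plan to handle by an inverse-limit / Tychonoff compactness argument. For each $M \unlhd_{\mathrm{f.i.}} P$ set
$$T_M := \{\theta \in \mathrm{Hom}(P, P/M) : \theta(\gamma_1) = \gamma_2 M\},$$
a finite set; the natural transition maps of $\{\mathrm{Hom}(P, P/M)\}$ restrict to $T_M \to T_{M'}$ whenever $M \leq M'$, since the projection of $\gamma_2 M$ to $P/M'$ is $\gamma_2 M'$. By $(4)$, each $T_M$ is nonempty: pick $J \leq M$ with $\alpha \in \mathrm{End}(P/J)$ satisfying $\alpha(\gamma_1 J) = \gamma_2 J$, then compose with the projections $P \to P/J$ and $P/J \to P/M$. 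As an inverse limit of nonempty finite discrete sets over a directed poset, $\varprojlim T_M$ is nonempty, and each of its elements corresponds under the identification $\mathrm{Hom}(P, \hat P) \cong \varprojlim_M \mathrm{Hom}(P, P/M)$ to a homomorphism $\phi : P \to \hat P$ with $\phi(\gamma_1) = i(\gamma_2)$; the universal property extends $\phi$ uniquely to a continuous $\hat\phi \in \mathrm{End}(\hat P)$ satisfying $\hat\phi(i(\gamma_1)) = i(\gamma_2)$, and the symmetric construction produces the reverse endomorphism. The subtlest point here is that $(4)$'s endomorphism at the quotient level $P/J$ need not come from an endomorphism of $P$ or $\hat P$, yet it nonetheless populates each $T_M$ by descent along $P/J \to P/M$ --- this is precisely why the inverse-system formulation avoids any lifting issue that would arise if one tried to construct $\hat\phi$ more directly.
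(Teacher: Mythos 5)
Your proposal is correct, and implications $(1)\Rightarrow(2)$, $(2)\Rightarrow(3)$ and $(3)\Rightarrow(4)$ follow essentially the same lines as the paper: the paper likewise uses the correspondence $\mathrm{Hom}(P,G)\leftrightarrow\mathrm{Hom}(\hat P,G)$ for the first step, the nonemptiness of $\mathrm{Hom}_{\gamma_i,\gamma_j J}(P,P/J)$ together with $J\subseteq\ker f$ for the second, and the choice $J=J_P(P/N)$ for the third. Two points where you genuinely diverge are worth recording. First, you observe that $K_P(G)$ is already fully invariant (since $P/\psi^{-1}(N)$ embeds in $P/N$ and hence in $G$, so $\psi^{-1}(N)$ belongs to the defining family), whence $J_P(G)=K_P(G)$; the paper keeps the two subgroups formally distinct and proves full invariance of $J_P(G)$ separately, so your remark is a clean simplification, and your spelled-out justification that $\ker\phi\supseteq K_P(G)$ via the embedding $P/K_P(G)\hookrightarrow\prod_i P/N_i$ fills in a step the paper dismisses with ``because of the choice of $J$.''

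Second, and more substantially, your closing of the cycle is different. The paper proves $(3)\Rightarrow(1)$ by first establishing a separate lemma identifying $\mathrm{End}(\hat P)$ with the inverse limit $\varprojlim_J\mathrm{End}(P/J)$ over the fully invariant subgroups $J=J_P(G)$, and then running a compactness argument inside that inverse system of endomorphism monoids. You instead prove $(4)\Rightarrow(1)$ by a compactness argument on the inverse system of finite sets $T_M\subseteq\mathrm{Hom}(P,P/M)$, producing a homomorphism $P\to\hat P$ sending $\gamma_1$ to $i(\gamma_2)$ and extending it to a continuous endomorphism of $\hat P$ by the universal property. This bypasses the profinite-monoid lemma entirely and only requires the standard fact that an inverse limit of nonempty finite sets over a directed poset is nonempty; it is the more elementary route, at the cost of not exhibiting the structural statement about $\mathrm{End}(\hat P)$ that the paper's lemma provides. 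Both arguments are valid and yield the full equivalence of the four conditions.
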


We have that for a group $H$, $\mathrm{End}(H)$ is a monoid and $\mathrm{End}(\widehat{H})$ is a profinite monoid \cite{almeida2020profinite}. More details about profinite monoids can be found in \cite{almeida2020profinite}. We need the following lemma to prove the above theorem. 

\begin{lemma}\label{promon}
    Let $P$ be a finitely generated group. Then $\mathrm{End} (\widehat{P}) = \varprojlim_{J} \mathrm{End}(P/J)$, where the inverse limit is taken over all $J \leq P$ such that $J = J_P(G)$ for some finite group $G.$
\end{lemma}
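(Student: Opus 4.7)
The plan is to exhibit the isomorphism as a combination of (i) passage to fully invariant quotients $P/J_P(G)$ and (ii) the standard identification of a profinite group with the inverse limit of its finite quotients. The crucial preliminary observation is that the family $\{J_P(G)\}_{G \text{ finite}}$ is \emph{cofinal} in the directed system of all finite-index normal subgroups of $P$: for any $N\unlhd_{\text{f.i.}} P$, setting $G=P/N$ gives $K_P(G)\subseteq N$ (since $N$ itself appears in the intersection defining $K_P(G)$), hence $J_P(G)\subseteq K_P(G)\subseteq N$. Combined with $\hat P=\varprojlim_{N\unlhd_{\text{f.i.}}P}P/N$, this yields
\[
\hat P \;=\; \varprojlim_{G}\, P/J_P(G),
\]
which will be the geometric backbone of the argument.

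Next, I would verify that $\{\mathrm{End}(P/J_P(G))\}_G$ carries a well-defined inverse-system structure. Whenever $J_P(G')\subseteq J_P(G)$, full invariance of $J_P(G)$ in $P$ forces $J_P(G)/J_P(G')$ to be fully invariant in $P/J_P(G')$; hence every endomorphism of $P/J_P(G')$ preserves $J_P(G)/J_P(G')$ and descends to an endomorphism of $P/J_P(G)$. This gives transition monoid homomorphisms $\mathrm{End}(P/J_P(G'))\to\mathrm{End}(P/J_P(G))$ and turns $\varprojlim_G\mathrm{End}(P/J_P(G))$ into an honest inverse limit of finite monoids.

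Now I would define the candidate isomorphism
\[
\Phi:\mathrm{End}(\hat P)\longrightarrow \varprojlim_G \mathrm{End}(P/J_P(G))
\]
as follows. Each $\hat\psi\in\mathrm{End}(\hat P)$ is continuous, and the closure $\overline{J_P(G)}$ is a characteristic open normal subgroup of $\hat P$ with $\hat P/\overline{J_P(G)}\cong P/J_P(G)$; full invariance of $J_P(G)$ implies $\hat\psi(\overline{J_P(G)})\subseteq \overline{J_P(G)}$, so $\hat\psi$ descends to an endomorphism $\psi_G\in\mathrm{End}(P/J_P(G))$, and the family $(\psi_G)_G$ is compatible by construction. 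Injectivity of $\Phi$ is immediate from cofinality: if $\Phi(\hat\psi_1)=\Phi(\hat\psi_2)$, then $\hat\psi_1$ and $\hat\psi_2$ agree modulo every $\overline{J_P(G)}$, and since $\bigcap_G \overline{J_P(G)}=\{e\}$ (by cofinality and the Hausdorff property of $\hat P$), they coincide. For surjectivity, a compatible family $(\psi_G)_G$ assembles into a continuous map $\varprojlim \psi_G:\varprojlim P/J_P(G)\to \varprojlim P/J_P(G)$, which via the identification above is a continuous endomorphism of $\hat P$ mapping to $(\psi_G)_G$.

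The only point I expect to require genuine care is verifying that full invariance is both necessary and sufficient for the pieces to fit together: it is needed for $\hat\psi$ to descend, for the transition maps in the inverse system to exist, and for the cofinality argument to collapse all the arbitrary finite-index data into fully invariant data. Once this bookkeeping is laid out, the rest is a routine application of the universal property of inverse limits and the fact that a compatible system of maps between finite groups automatically assembles into a continuous map between their profinite limits.
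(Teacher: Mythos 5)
Your proposal is correct and follows essentially the same route as the paper: use full invariance of the $J_P(G)$ to build the transition maps, identify $\hat P$ with $\varprojlim_G P/J_P(G)$, and check that the natural map $\mathrm{End}(\hat P)\to\varprojlim_G\mathrm{End}(P/J_P(G))$ is injective (trivial intersection of the $\overline{J_P(G)}$) and surjective (assembling a compatible family into a continuous endomorphism of the inverse limit). Your explicit cofinality observation $J_P(P/N)\subseteq K_P(P/N)\subseteq N$ is a welcome touch — the paper leaves this implicit when asserting $\bigcap_G J_{\hat P}(G)=\{e_{\hat P}\}$ — but it does not change the substance of the argument.
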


\begin{proof}
    Let $J_P(G_1) = J_1 \leq J_2 = J_P(G_2), $ and denote $Q_i = P/J_i $ for $i = 1,2.$ Let $\pi: P \to Q_1$ be the quotient map. The image of $J_2$ in $Q_1$ is $J_2/J_1$. We first prove $J_2/J_1 = J_{Q_1}(G_2)$: 

    \begin{itemize}
        \item[($\subseteq$)] For any $x = \pi(j) \in J_2/J_1$ (where $j \in J_2$) and $\psi \in \mathrm{End}(Q_1)$, we need to show $\psi(x) \in K_{Q_1}(G_2)$. Fix a normal subgroup $N' \unlhd Q_1$ with $Q_1/N' \leq G_2$. Define $M = \pi^{-1}(N')$ so $M \unlhd P$ and $P/M \cong Q_1/N' \leq G_2$. Then $K_P(G_2) \subseteq M$. Let $\pi_{N'} : Q_1 \to Q_1/N'$ be the quotient map. Then, the homomorphism $\pi_{N'} \circ \psi \circ \pi: P \to Q_1/N'$ has image in $G_2$, so $K_P(G_2) \subseteq \ker(\pi_{N'} \circ \psi \circ \pi)$. Since $j \in J_2 \subseteq K_P(G_2)$, we have $\psi(\pi(j)) \in N'$. As this holds for all such $N'$ with $Q_1/N' \leq G_2$, $\psi(x) \in K_{Q_1}(G_2)$. Thus, $x \in J_{Q_1}(G_2)$.
        \item[($\supseteq$)] Let $y = \pi(p) \in J_{Q_1}(G_2)$ for some $p \in P$. For any $\phi \in \mathrm{End}(P)$, since $\phi(J_1) \subseteq J_1$, $\phi$ induces an endomorphism $\overline{\phi}$ of $Q_1$ with $\overline{\phi} \circ \pi = \pi \circ \phi$. Then $\overline{\phi}(y) \in K_{Q_1}(G_2)$, so $\phi(p) \in \pi^{-1}(K_{Q_1}(G_2))$. Since $J_1 \subseteq M$ for every $M \unlhd P$ with $P/M \leq G_2$ (as $J_1 \subseteq J_2 \subseteq K_P(G_2) \subseteq M$ for every $M \unlhd P$ with $P/M \leq G_2$), the set $\{M \unlhd P : P/M \leq G_2\}$ equals $\{M \unlhd P : J_1 \subseteq M,  P/M \leq G_2\}$. Hence, we have
        $$
        \pi^{-1}(K_{Q_1}(G_2)) = \bigcap_{\substack{N \unlhd Q_1,\\ {Q_1}/{N} \leq G_2}} \pi^{-1}(N) = \bigcap_{\substack{M \unlhd P, \\ P/M \leq G_2}} M = K_P(G_2).
        $$
        Therefore, $\phi(p) \in K_P(G_2)$. As this holds for all $\phi \in \mathrm{End}(P)$, $p \in J_2$. Therefore, $y \in J_2/J_1$.
    \end{itemize}

    This establishes that the image of $J_2$ in $Q_1$ is equal to $J_{Q_1}(G_2).$ By construction, $J_{Q_1}(G_2)$ is fully invariant in $Q_1$. Consequently, there exists a well-defined monoid homomorphism $\mathrm{End}(Q_1) \to \mathrm{End}(Q_2).$ 

    For $J_1 = J_P(G_1), J_2 = J_P(G_2)$, define $G_3 = P/(J_1 \cap J_2)$. Then $J_3 = J_P(G_3) \leq J_1 \cap J_2 \leq J_1, J_2$. Thus, the subgroups $J = J_P(G)$ (for finite $G$) form a directed set, making $\{\mathrm{End}(P/J)\}_{\{J = J_P(G)\}}$ an inverse system. Hence, $\varprojlim_{J} \mathrm{End}(P/J)$ is well-defined.

    By \cite[Proposition 3.2.2]{ribes2000profinite}, for every $J \unlhd_{\mathrm{f.i.}} P$, we have $P/J \cong \widehat{P}/\overline{i(J)},$ and $\overline{i(J_P(G))} = J_{\widehat{P}}(G).$ Therefore, it suffices to show that $$\mathrm{End} (\widehat{P}) = \varprojlim_{J} \mathrm{End}(\widehat{P}/J),$$
    where the inverse limit is taken over all subgroups $J \leq \widehat{P}$ such that $J = J_{\widehat{P}}(G)$ for some finite group $G.$ Since $J = J_{\widehat{P}}(G)$ is fully invariant in $\widehat{P}$, every endomorphism of $\widehat{P}$ induces an endomorphism of $\widehat{P}/J$ which agrees with the inverse system, leading to a natural continuous homomorphism: $$\omega: \mathrm{End} (\widehat{P}) \longrightarrow \underset{}{}\varprojlim_{J} \mathrm{End}(\widehat{P}/J).$$
    The map $\omega$ is injective because $\underset{G\,\mathrm{finite}}{\cap} J_{\widehat{P}}(G) = \{e_{\widehat{P}}\}.$ The map $\omega$ is surjective because every element of the inverse system $\varprojlim_{J} \mathrm{End}(\widehat{P}/J)$ defines a continuous endomorphism of $\varprojlim_{J} \widehat{P}/J \cong \widehat{P}.$
\end{proof}

\begin{proof}[Proof of Theorem \ref{main}]
\mbox{}
\begin{description}
    \item[$1 \implies 2$] The proof follows from the fact that $\mathrm{Hom}(P,G)$ and $\mathrm{Hom}(\widehat{P},G)$, for every finite group $G$ are in one-one correspondence and we have $| \mathrm{Hom}_{\gamma,g} (P,G)| = |\mathrm{Hom}_{i(\gamma),g}(\widehat{P}, G)|.$ Hence, $$ \vert \mathrm{Hom}_{\gamma_1,g} (P,G)| = |\mathrm{Hom}_{i(\gamma_1),g}(\widehat{P}, G)|  = |\mathrm{Hom}_{i(\gamma_2),g}(\widehat{P}, G)| = |\mathrm{Hom}_{\gamma_2,g} (P,G)|.$$
    \item [$2 \implies 3$] Let $J = J_P(G)$ for some finite group $G$. As $\mathrm{Hom}_{\gamma_1, \gamma_1J}(P, P/J)\neq \emptyset \implies \mathrm{Hom}_{\gamma_2, \gamma_1J}(P,P/J) \neq \emptyset$. Let $f \in \mathrm{Hom}_{\gamma_2, \gamma_1J}(P,P/J).$ Then $J \subseteq \ker(f)$ because of the choice of $J.$ Hence, there exists an induced homomorphism $\Tilde{f}: P/J \longrightarrow P/J$ such that $(\tilde f) (\gamma_2J) = \gamma_1J.$ Similarly, we can show the other way. Hence, $\gamma_1J$ and $\gamma_2J$ are endomorphic equivalent in $P/J$.
    \item [$3 \implies 4$] For every $N \unlhd_{\mathrm{f.i.}}P,$ we can take $J = J_P(P/N).$ 
    \item [$4 \implies 3$] Let $J = J_P(G)$ for some finite group $G.$ By assumption, there exists a subgroup $J' \le J$ such that $J' \unlhd_{\mathrm{f.i.}} P$ with $\gamma_1J'$ and $\gamma_2J'$ endomorphic equivalent. But the image of $J$ in $P/J'$ is $J_{P/J'}(G),$ so this image is fully invariant, hence, every endomorphism of $P/J'$ induces an endomorphism in $P/J$. Hence, we conclude that $\gamma_1J, \gamma_2J$ are endomorphic equivalent in $P/J.$
    \item [$3 \implies 1$] For each $J = J_P(G)$, define
$$
S_J = \left\{ \alpha \in \mathrm{End}(P/J) \mid \alpha(\gamma_1 J) = \gamma_2 J \right\}.
$$
Each $S_J$ is non-empty by hypothesis and closed in the finite monoid $\mathrm{End}(P/J)$ (endowed with the discrete topology). The canonical projections $$\pi_J: \varprojlim_{J} \mathrm{End}(P/J) \to \mathrm{End}(P/J)$$ are continuous. Thus, $\pi_J^{-1}(S_J)$ is closed in $\varprojlim_{J} \mathrm{End}(P/J)$ for each $J$.

For any finite collection $\{J_1, \dots, J_n\}$, directedness yields $J \subseteq \bigcap_{i=1}^n J_i$ with $J = J_P(G)$. For $\alpha \in S_J$, full invariance of $J_i/J$ in $P/J$ ensures $\alpha$ induces $\alpha_{J_i} \in S_{J_i}$ which is also the image of $\alpha$ in $\mathrm{End}(P/J_i)$. In other words, an element of $\varprojlim_{J} \mathrm{End}(P/J)$ defined by $\alpha$ at $J^{\text{th}}$ coordinate forces the $J_i^{\text{th}}$ coordinate to be $\alpha_{J_i}$ for each $1 \leq i \leq n$. Hence, the compatibility condition implies that $\pi_{J}^{-1}(\alpha) \subseteq \pi_{J_i}^{-1}(\alpha_{J_i})$ for each $1 \leq i \leq n$. Thus, $\bigcap_{i=1}^n \pi_{J_i}^{-1}(S_{J_i}) \neq \emptyset$. So the family $\{\pi_J^{-1}(S_J)\}_J$ has the finite‐intersection property. Compactness of $\varprojlim_{J} \mathrm{End}(P/J)$ then gives:
$$
\bigcap_J \pi_J^{-1}(S_J) \neq \emptyset.
$$
Take $(\alpha_J)_J$ in this intersection. Then $\alpha_J(\gamma_1 J) = \gamma_2 J$ for all $J$. By Lemma \ref{promon}, there exists $\widehat{\alpha} \in \mathrm{End}(\widehat{P})$ mapping $i(\gamma_1) = (\gamma_1 J)_J$ to $i(\gamma_2) = (\gamma_2 J)_J$. Repeating with $\gamma_1,\gamma_2$ swapped yields $\widehat{\beta} \in \mathrm{End}(\widehat{P})$ mapping $i(\gamma_2)$ to $i(\gamma_1)$.
\end{description} 
\end{proof}

The above theorem is the reason to generalize Definition \ref{def1}.

\begin{definition}
    Let $P$ be a finitely generated group. An element $\gamma_1 \in P$ is said to be weakly profinitely rigid if it is endomorphic equivalent to every $\gamma_2\in P$ for which $i(\gamma_1)$ and $i(\gamma_2)$ are endomorphic equivalent in $\widehat{P}$.
\end{definition}

 It is straightforward that every element in a finite group is weakly profinitely rigid, and also profinitely rigid. But two words which are endomorphic equivalent need not be automorphic. For example, take two elements of order $2$ in group $S_3 \times \Z_2$, namely $((1\,2),\bar{0})$ and $((1 ),\bar{1})$, they are endomorphic to each other but they are not automorphic. 
 
 The study whether endomorphic equivalence is the same as automorphic equivalence has been initiated by Calvert, Dutta and Prasad in \cite{calvert2013degeneration} for countable abelian groups. For $a$ and $b$, endomorphic equivalent elements in a group $G$, they called $a$ and $b$ degenerate to each other. They proved the following result:

 \begin{lemma}\cite{calvert2013degeneration}
     Let $G$ be a countable abelian group. Two endomorphic equivalent elements lie in the same $\mathrm{Aut}(G)$-orbit. 
 \end{lemma}

\begin{theorem}
    Every element of a finitely generated abelian group is weakly profinitely rigid.
\end{theorem}
\begin{proof}
    Let $P$ be a finitely generated abelian group. Suppose $a, b \in P$ have identical images in every finite group as word maps. By Theorem \ref{main}, we get that $aJ$ and $bJ$ are endomorphic equivalent for each $J = J_P(G)$, where $G$ is a finite group. Since quotient groups $P/J$ are abelian, $aJ$ and $bJ$ are automorphic by \cite{calvert2013degeneration}. Hence, $i(a)$ and $i(b)$ are automorphic in $\widehat{P}$. From \cite[Theorem~2.2]{hanany2020some}, we get that $a$ and $b$ induce the same probability measure on every finite group $G$. Therefore, any two elements of a finitely generated abelian group having the same image on every finite group induce the same probability measure on every finite group. Since every element in a finitely generated abelian group is profinitely rigid \cite{hanany2020some}, hence, $a$ and $b$ are endomorphic equivalent. This proves our result.
\end{proof}
 
We present an example of a group in which no element is weakly profinitely rigid.
\begin{ex}
    Let $S$ be a finitely generated infinite simple group. Then no element of $S$ is weakly profinitely rigid, since $\mathrm{Hom}(S,G)$, for any finite group $G,$ consists of only the trivial homomorphism.
\end{ex}

 \begin{rem}
     In fact, non-trivial elements from the intersection of all normal subgroups of finite index will never be weakly profinitely rigid. Hence, every non-residually finite group contains an element which is not weakly profinitely rigid. 
 \end{rem}

All known examples of groups containing elements that are not weakly profinitely rigid are non-residually finite. This leads us to the following intriguing question.

\begin{qu}
    Is every element in a residually finite group weakly profinitely rigid?
\end{qu}

\section{Test Elements in Free Profinite Group}

An element $g \in G$ is said to be a test element if any endomorphism $\phi$ of the group $G$ that satisfies $\phi(g) = g$ is an automorphism. In this section, we will discuss test words in a finitely generated free group that remain test words in its profinite completion.

\begin{definition}
    A subgroup of a group $G$ is said to be a retract of $G$ if there exists a homomorphism $\phi: G \longrightarrow H$ such that $\phi\vert_H$ is the identity map. 
\end{definition}
Turner has classified test words in free groups.

\begin{theorem}\cite{Turner}\label{turner1}
    The test words in a free group $F_n$ are words that are not contained in any proper retract.
\end{theorem}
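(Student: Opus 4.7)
The proof splits into two directions, and my plan is to dispatch the easy one first and then concentrate effort on the converse. For the easy direction, suppose $w$ lies in a proper retract $H \subsetneq F_n$ with retraction $\rho \colon F_n \to H$. Then $\rho$ is an endomorphism of $F_n$ fixing $w$ (since $w \in H$ and $\rho|_H = \mathrm{id}_H$), yet $\rho$ cannot be an automorphism because its image is the proper subgroup $H$. Hence $w$ is not a test word.

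For the harder converse, assume $w$ is not a test word, so there exists a non-automorphism $\phi \in \mathrm{End}(F_n)$ with $\phi(w) = w$. Since finitely generated free groups are Hopfian, $\phi$ cannot be surjective, so $\phi(F_n) \subsetneq F_n$. I would then study the descending chain of finitely generated free subgroups
\[
F_n \supseteq \phi(F_n) \supseteq \phi^2(F_n) \supseteq \cdots,
\]
every term of which contains $w$ because $\phi^k(w) = w$ for all $k$. The plan is to exhibit an integer $k$ for which $\phi$ maps $H := \phi^k(F_n)$ onto itself; once this is secured, Hopficity applied to the finitely generated free group $H$ upgrades $\phi|_H$ to an automorphism of $H$, and the formula $\rho(g) := (\phi|_H)^{-k}\bigl(\phi^k(g)\bigr)$ defines a retraction $F_n \to H$. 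This retract is proper because $H \subseteq \phi(F_n) \subsetneq F_n$, and it contains $w$, closing the argument.

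The main obstacle is establishing this stabilization $\phi(H) = H$: descending chains of finitely generated subgroups of $F_n$ need not stabilize in general, so the assertion requires input beyond elementary subgroup theory. To handle this, I would either follow Turner's Stallings-folding approach, analyzing the graph-theoretic representations of $\phi^k(F_n)$ and tracking a combinatorial complexity that must eventually stop decreasing, or invoke the Bestvina--Handel--Gersten bound that $\mathrm{Fix}(\phi)$ has rank at most $n$. The rank bound on $\mathrm{Fix}(\phi)$ (which contains $w$) restricts the dynamics enough to produce the needed $k$; an alternative endgame is to show directly that $\mathrm{Fix}(\phi)$, or a finitely generated free subgroup containing it, is itself a proper retract of $F_n$. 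Reducing the problem to this rank-controlled invariant subgroup is the conceptually sharp way to explain why the desired retract must exist.
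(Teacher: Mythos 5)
This statement is quoted from Turner's paper and is not proved in the article, so there is no internal proof to compare against; I will assess your sketch on its own terms. Your easy direction is correct and is the standard argument: a retraction onto a proper retract containing $w$ is a non-surjective endomorphism fixing $w$.

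The converse, however, has a genuine gap at exactly the point you flag as "the main obstacle": the stabilization $\phi(\phi^k(F_n)) = \phi^k(F_n)$ you need simply does not occur in general, so the retraction formula $\rho = (\phi|_H)^{-k}\circ\phi^k$ has nothing to act on. Concretely, take $\phi \in \mathrm{End}(F_2)$ with $\phi(x_1) = x_1^2$ and $\phi(x_2) = x_2$; this fixes $w = x_2$, is injective but not surjective, and $\phi^k(F_2) = \langle x_1^{2^k}, x_2\rangle$ is a strictly descending chain with $\phi(\phi^k(F_2)) = \langle x_1^{2^{k+1}}, x_2\rangle \subsetneq \phi^k(F_2)$ for every $k$. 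The ranks of the images stabilize (they are all $2$), so no rank-counting argument can rescue the plan; the correct object is the stable image $F_\omega = \bigcap_k \phi^k(F_n)$ (here $\langle x_2\rangle$), and the Imrich--Turner theorem is what guarantees that $F_\omega$ has rank at most $n$, that $\phi$ restricts to an automorphism of it, and that $\mathrm{Fix}(\phi) \subseteq F_\omega$. Even granting all of that, plus the Bestvina--Handel/Gersten bound $\mathrm{rank}(\mathrm{Fix}(\phi)) \le n$ that you invoke, there is still no formal implication from "$w$ lies in a finitely generated $\phi$-invariant subgroup of bounded rank" to "$w$ lies in a proper retract": producing the retraction onto (a subgroup containing) $F_\omega$ is the actual content of Turner's theorem, and your sketch defers it entirely to an unproved "alternative endgame." So the proposal correctly isolates where the difficulty lives but does not supply the idea that resolves it, and the one concrete mechanism it does propose is refuted by the example above.
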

Snopce and Tanushevski have proved the analogous result for a finitely generated profinite group.

\begin{theorem}\cite[Theorem~3.5]{snopce2017test}\label{snopce}
The test elements of a finitely generated profinite group are exactly the elements that are not contained in any proper retract.    
\end{theorem}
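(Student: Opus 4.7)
The plan is to prove the characterization in two directions, only one of which is substantive. The easy direction asserts that an element lying in a proper retract is not a test element: if $g \in H$ where $H$ is a proper retract of $G$ with continuous retraction $r \colon G \to H$, then $r$ is a continuous endomorphism of $G$ satisfying $r(g)=g$, while $r(G)=H \subsetneq G$ shows $r$ is not surjective and therefore not an automorphism. So the work is in the converse: if $g$ is not a test element, then $g$ lies in some proper retract.

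Assume therefore that there is a continuous endomorphism $\phi \colon G \to G$ with $\phi(g)=g$ that is not an automorphism. Since a finitely generated profinite group is Hopfian (every continuous surjective endomorphism is an automorphism, by \cite{ribes2000profinite}), $\phi$ fails to be surjective, and $\phi(G)$ is a proper closed subgroup of $G$ (closed because $G$ is compact and $\phi$ is continuous). My goal is to manufacture from $\phi$ an idempotent continuous endomorphism $e$ of $G$ that still fixes $g$ and whose image is contained in $\phi(G)$; then $e$ will be a retraction onto the proper retract $e(G) \ni g$.

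The idempotent is produced by a standard compactness argument in the monoid $\mathrm{End}(G)$ of continuous endomorphisms, equipped with the topology of pointwise convergence. This is a compact Hausdorff right-topological semigroup (indeed a profinite monoid when $G$ is finitely generated, as used earlier in the paper). Applying the Ellis--Numakura lemma to the closed subsemigroup $S = \overline{\{\phi^n : n \geq 1\}}$ yields an idempotent $e \in S$. Because the condition $\psi(g)=g$ is closed in $\mathrm{End}(G)$ and holds for every $\phi^n$, it persists in the limit, giving $e(g)=g$. Moreover, any $\psi \in S$ is a limit of powers $\phi^{n_i}$ with $n_i \geq 1$, so for each $x\in G$ we have $\psi(x) \in \overline{\phi(G)} = \phi(G)$; in particular $e(G) \subseteq \phi(G) \subsetneq G$. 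Idempotency of $e$ makes $e(G)$ a retract of $G$, and it is proper, completing the argument.

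The principal obstacle, and the reason a semigroup-theoretic detour is needed instead of a naive iteration, is that the descending chain $\phi(G) \supseteq \phi^2(G) \supseteq \cdots$ need not stabilise in a subgroup on which $\phi$ already acts as the identity: $\phi$ may act nontrivially on $H_\infty = \bigcap_n \phi^n(G)$, and so iterating $\phi$ alone does not directly produce a retraction. Extracting the idempotent $e$ is precisely the ``diagonal'' procedure that selects a limit under which $\phi$ behaves like the identity on its eventual image, while simultaneously preserving both the fixed point $g$ and continuity; compactness of $\mathrm{End}(G)$, which depends essentially on $G$ being finitely generated profinite, is what makes this selection possible.
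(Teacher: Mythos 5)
This theorem is imported from Snopce--Tanushevski and the paper gives no proof of it, so there is nothing internal to compare against; judged on its own, your argument is correct and is essentially the argument of the cited source: realise $\mathrm{End}(G)$ as a profinite monoid, extract the idempotent $\phi^{\omega}$ in the closed subsemigroup generated by $\phi$, and observe that it still fixes $g$, has image inside the proper closed subgroup $\phi(G)$, and retracts $G$ onto that image. The two ingredients a careful reader should check are exactly the ones you flag: compactness of $\mathrm{End}(G)$ under pointwise convergence (which needs the inverse-limit description over fully invariant open subgroups, hence finite generation) and the Hopfian property of finitely generated profinite groups, which converts ``not an automorphism'' into ``not surjective''. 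Both are standard, so the proof stands.
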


\begin{pro}\label{prof}
    A word $w \in F_n$ is a test element if and only if $i(w)$ is a test element in $\widehat{F_n}$.  
\end{pro}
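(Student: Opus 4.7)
The plan is to use Turner's characterization (Theorem \ref{turner1}) and Snopce--Tanushevski's characterization (Theorem \ref{snopce}), together reducing the claim to the statement that $w$ lies in some proper retract of $F_n$ if and only if $i(w)$ lies in some proper closed retract of $\hat{F_n}$.

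For the easier implication, suppose $w$ lies in a proper retract $R \subsetneq F_n$ with retraction $\rho \colon F_n \to R$. By Nielsen--Schreier, $R$ is free of some rank $k$; and one can show $k < n$, since a retract of rank $n$ would give an idempotent surjection $F_n \twoheadrightarrow F_n$, which by Hopf's property is an automorphism and hence (being idempotent) the identity, forcing $R = F_n$. Extending $\rho$ via the universal property of the profinite completion yields a continuous retraction $\hat{\rho} \colon \hat{F_n} \to \hat{R}$; the splitting of $R \hookrightarrow F_n$ through $\rho$ identifies $\hat{R}$ with a closed subgroup of $\hat{F_n}$, and this subgroup is proper because the abelianizations $\hat{\Z}^k$ and $\hat{\Z}^n$ have different topological ranks. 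Since $\hat{\rho}(i(w)) = i(\rho(w)) = i(w)$, the element $i(w)$ lies in the proper closed retract $\hat{R}$, as desired.

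The reverse implication is the main obstacle. Given $i(w)$ in some proper closed retract $\hat{R}$ of $\hat{F_n}$ with continuous idempotent retraction $\hat{\rho}$, one must produce a proper retract of $F_n$ containing $w$. The difficulty is that not every closed retract of $\hat{F_n}$ arises as the completion of a retract of $F_n$ --- for instance, in $\hat{\Z}$ the projection onto $\Z_p$ gives a proper closed retract not coming from any proper retract of $\Z$. My plan is to use the identification $\mathrm{End}(\hat{F_n}) = \varprojlim_{J} \mathrm{End}(F_n/J)$ from the preceding lemma: the idempotent $\hat{\rho}$ corresponds to a compatible family of idempotents $\rho_J \in \mathrm{End}(F_n/J)$ each fixing $wJ$, with some $\rho_{J_0}$ having proper image. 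I would then lift $\rho_{J_0}$ to an endomorphism $\phi$ of $F_n$ and, using an approximation argument together with the LERF property of free groups (Hall's theorem, that finitely generated subgroups of $F_n$ are closed in the profinite topology), modify $\phi$ so that $\phi(w) = w$ exactly while retaining a proper image. This last modification, converting the approximate fixing $\phi(w) \equiv w \pmod{J_0}$ into the exact equality $\phi(w) = w$ without destroying the properness of the image, is the principal technical hurdle I anticipate.
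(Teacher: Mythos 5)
Your forward implication is correct and is essentially the paper's argument: a proper retract $R$ of $F_n$ has rank strictly less than $n$, so the induced map $\hat{r}\colon \hat{F_n}\to\hat{R}$ is still a proper retraction, and Theorem \ref{snopce} applies. Your additional care about why the rank drops (Hopficity) and why $\hat{R}$ stays proper (comparing abelianizations) is sound, if more detailed than the paper's version.

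The reverse implication is where your proposal has a genuine gap, and you acknowledge it yourself: the step that upgrades the congruence $\phi(w)\equiv w \pmod{J_0}$ to the exact equality $\phi(w)=w$ while preserving properness of the image is not carried out, and it is not a routine approximation. The compatible family of idempotents $\rho_J$ gives, for each $J$, an endomorphism of a finite quotient fixing $wJ$; lifting a single $\rho_{J_0}$ to $\mathrm{End}(F_n)$ discards exactly the coherence across all $J$ that made $\hat{\rho}$ an idempotent fixing $i(w)$, and LERF alone does not manufacture one discrete endomorphism with proper image fixing $w$ on the nose. So as written the proof is incomplete. The paper takes a different and much shorter route: given a proper closed retract $S\leq\hat{F_n}$ containing $i(w)$, it sets $R=S\cap F_n$ (identifying $F_n$ with $i(F_n)$), asserts that $R$ is a proper retract of $F_n$ containing $w$, and applies Turner's characterization (Theorem \ref{turner1}). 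Your own $\hat{\Z}$ versus $\Z_p$ observation shows that this intersection step is itself nontrivial --- the continuous retraction onto $S$ need not carry $F_n$ into itself --- so if you switch to that route you should still justify why $S\cap F_n$ is a retract of $F_n$; but in any case the plan you propose does not close the argument.
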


\begin{proof}
    Suppose $w$ is not a test element in $F_n$. By Theorem \ref{turner1}, there exists a proper retract $R$ of $F_n$ containing $w$. Let $r: F_n \longrightarrow R$ be the retraction. Taking the induced map on its profinite completion, we get $\widehat{r} : \widehat{F_n} \longrightarrow \widehat{R}$, which will again be a proper retraction, since the rank of a proper retract of $F_n$ is stricly less than $n$ \cite{Turner}. Hence, $i(w)$ is not a test word in $\widehat{F_n}$ by Theorem \ref{snopce}.

    Suppose $i(w)$ is not a test word. By Theorem \ref{snopce}, there exists a proper retract $S$ of $\widehat{F_n}$ containing $i(w)$. Consider $R = S \cap F_n$, where $F_n$ and $i(F_n)$ are identified. $R$ is clearly a proper retract of $F_n$ containing $w$. Hence, $w$ is not a test word.
\end{proof}

The next lemma is the explicit description of test words in $F_2$ given by Turner.
\begin{lemma}\cite{Turner}\label{turner2}
     Let $w \in F_2 \setminus \{e\}$. Write $w = u^m, m \in \Z \setminus \{0\}$ such that $u$ is not a proper power word. Suppose $u = x_1^{m_1}x_2^{m_2}c,$ where $c \in F_2'$. Then $w$ is a test word if and only if $\gcd(m_1,m_2) \ne 1$.
\end{lemma}

\section{Main Results}
In this section, we establish that the powers of surjective words in $F_n$ and surface words,  i.e., $(x_1^2\ldots x_n^2)^d$ in $F_n$ and $([x_1,x_2]\ldots [x_{2n-1},x_{2n}])^d$ in $F_{2n}$, for $n \geq 1$ and any integer $d$, are weakly profinitely rigid. 
\begin{theorem}\label{main1}
    Let $F_n$ be a free group. If $w \in F_n$ is weakly profinitely rigid, then $w^d, d \in \Z$, is also weakly profinitely rigid.
\end{theorem}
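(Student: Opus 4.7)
The plan is to translate the hypothesis $G_u=G_{w^d}$ on every finite $G$ into endomorphic equivalence in the profinite completion $\hat{F_n}$ via Theorem~\ref{main}, lift the resulting $d$-th root from $\hat{F_n}$ back to $F_n$, and then invoke the weak profinite rigidity of $w$. First I would dispose of the trivial cases: when $d=0$ the word $w^d$ is the identity and residual finiteness of $F_n$ forces any $u$ with $G_u=\{e\}$ on all finite $G$ to be trivial; when $d<0$ I would use $w^d=(w^{-1})^{|d|}$ together with the easy observation that $w^{-1}$ is weakly profinitely rigid whenever $w$ is (since $G_{w^{-1}}=(G_w)^{-1}$ and endomorphic equivalence is preserved by inversion). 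So assume $d>0$.

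Next, Theorem~\ref{main} applied to $\gamma_1=u$, $\gamma_2=w^d$ would produce continuous endomorphisms $\hat\phi,\hat\psi\in\mathrm{End}(\hat{F_n})$ with $\hat\phi(i(w))^d=i(u)$ and $\hat\psi(i(u))=i(w)^d$, so that $a:=\hat\phi(i(w))$ is a $d$-th root of $i(u)$ in $\hat{F_n}$. The key technical step is to show that $u$ itself admits a $d$-th root in $F_n$. I would write $u=r^m$ with $r\in F_n$ the unique root-free element having $u$ as a power, and invoke the standard fact that the closed centralizer of a non-trivial element of the free profinite group $\hat{F_n}$ is procyclic, here equal to $\overline{\langle i(r)\rangle}\cong\hat{\Z}$. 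Any $d$-th root of $i(u)$ commutes with $i(u)$ and hence lies in this procyclic centralizer, so $a=i(r)^\lambda$ for some $\lambda\in\hat{\Z}$, and $d\lambda=m$ in the torsion-free group $\hat{\Z}$. Since $\Z\hookrightarrow\hat{\Z}$ respects divisibility, this forces $d\mid m$, so $\lambda=m/d\in\Z$ and $a=i(r^{m/d})$. Setting $v:=r^{m/d}\in F_n$ gives $u=v^d$ and $i(v)=a=\hat\phi(i(w))$.

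The same procyclic uniqueness argument applied to $\hat\psi(i(v))^d=i(w)^d$ would yield $\hat\psi(i(v))=i(w)$ (both sides are $d$-th roots of $i(w)^d$ in the procyclic closure of $\langle i(s)\rangle$, where $w=s^k$ with $s$ root-free, and $d$-multiplication is injective on the torsion-free group $\hat{\Z}$). Hence $i(v)$ and $i(w)$ are endomorphic equivalent in $\hat{F_n}$; by Theorem~\ref{main} this is equivalent to $G_v=G_w$ on every finite $G$. Weak profinite rigidity of $w$ then produces $\alpha,\beta\in\mathrm{End}(F_n)$ with $\alpha(w)=v$ and $\beta(v)=w$, and applying them yields $\alpha(w^d)=v^d=u$ and $\beta(u)=\beta(v)^d=w^d$, establishing endomorphic equivalence of $u$ and $w^d$ in $F_n$.

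I expect the principal obstacle to be the lifting step of a $d$-th root from $\hat{F_n}$ to $F_n$ and the uniqueness of such roots; both rest on the procyclicity of closed centralizers of non-trivial elements in free profinite groups (a classical fact from the Herfort--Ribes and Ribes--Zalesskii line of work) together with torsion-freeness of $\hat{\Z}$. Once that input is in hand, the remaining manipulations via Theorem~\ref{main} and weak profinite rigidity of $w$ are routine.
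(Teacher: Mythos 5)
Your proposal is correct and follows the same skeleton as the paper's proof: pass to $\hat{F_n}$ via Theorem~\ref{main}, extract a $d$-th root $v\in F_n$ of $u$, show $i(v)$ and $i(w)$ are endomorphically equivalent, and finish with the weak profinite rigidity of $w$. The one genuine difference is how you obtain $u=v^d$. The paper gets this \emph{before} going profinite, by citing Lemma~\ref{L1} (the symmetric-group criterion: if every value of $u$ on every $S_m$ is a $d$-th power, then $u$ is a $d$-th power in $F_n$), and only then uses Lemma~\ref{L2} plus uniqueness of roots to identify $\phi_1(i(w))$ with $i(v)$. You instead manufacture the root directly from the profinite endomorphism, arguing that the $d$-th root $a=\hat\phi(i(w))$ of $i(u)$ lies in the centralizer of $i(u)$, which you identify with $\overline{\langle i(r)\rangle}\cong\hat{\Z}$. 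Be aware that this identification is not a formal consequence of the centralizer being procyclic: a torsion-free procyclic group can properly contain $\overline{\langle i(r)\rangle}$ a priori, and the statement that it does not is essentially equivalent to the paper's Lemma~\ref{L2} (every root in $\hat{F_n}$ of an element of $F_n$ already lies in $F_n$). So either cite that result precisely from the Herfort--Ribes/Ribes--Zalesskii literature, or simply quote Lemma~\ref{L2} to place $a$ in $i(F_n)$ and then use uniqueness of roots in the discrete free group; this shortens your argument and removes the only soft spot. On the plus side, your treatment is more careful than the paper's in two respects: you handle $d\le 0$ explicitly, and you make explicit the uniqueness of $d$-th roots in $\hat{F_n}$ (via torsion-freeness of the procyclic centralizer), which the paper uses silently in the step ``$\phi_1(i(w))^d=i(v)^d$ hence $\phi_1(i(w))=i(v)$.''
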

\begin{lemma}\cite[Theorem~4.8]{hanany2020some}\label{L1}
    Let $w \in F_n$ such that every image of $w$ on every symmetric group $S_r$ is $d^{\text{th}}$ power for some $d \in \mathbb{Z}$, then there exists $v \in F_n$ such that $w = v^d$. 
\end{lemma}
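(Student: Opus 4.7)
The plan is to exploit unique roots in $F_n$ and then, assuming $w$ is not a $d$th power in $F_n$, produce a symmetric group $S_m$ and an $n$-tuple $\bar g \in S_m^n$ on which $w(\bar g)$ is not a $d$th power, contradicting the hypothesis.

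First I would factor $w = u^e$ with $u$ root-free and $e \geq 1$ maximal; such a factorization exists and is essentially unique because free groups have unique roots. Since the set of $d$th powers in any group coincides with the set of $(-d)$th powers, I may assume $d \geq 1$. The lemma then reduces to showing $d \mid e$, for then $v := u^{e/d}$ satisfies $w = v^d$. Suppose for contradiction $d \nmid e$; choose a prime $p$ with $v_p(e) < v_p(d)$, and set $a := v_p(d)$, where $v_p$ denotes the $p$-adic valuation.

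The heart of the argument is to produce, for sufficiently large $m$, permutations $g_1, \dots, g_n \in S_m$ such that $u(\bar g)$ contains a single cycle of length $p^a$ while the remainder of its cycle decomposition contributes no cycles of length $p^{a - v_p(e)}$ after raising to the $e$th power. One approach is a direct Schreier/Stallings coset construction, using that $\langle u\rangle$ is a maximal cyclic subgroup of $F_n$ whenever $u$ is root-free; another is to invoke the image-flexibility results for non-proper-power word maps on symmetric groups developed in \cite{puder2015measure, hanany2020some}. Verifying such a realization exists in sufficient generality -- dodging parity obstructions that occur for words like commutators when $m$ is too small -- is the main obstacle in the proof; in particular, it is exactly why one cannot simply take $m = p^a$ and force $u(\bar g)$ to be a single $m$-cycle.

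Granted such a realization, the $p^a$-cycle inside $u(\bar g)$ raises under $e$ to $\gcd(p^a, e) = p^{v_p(e)}$ disjoint cycles of length $p^{a - v_p(e)}$, and by the choice of the rest of $u(\bar g)$ these are the only cycles of that length in $w(\bar g) = u(\bar g)^e$. The classical criterion for $d$th powers in symmetric groups says: $\pi \in S_m$ is a $p^a$th power only if, for every $k$ with $v_p(k) > 0$, the number of $k$-cycles of $\pi$ is divisible by $p^a$. Applied with $k = p^{a - v_p(e)}$, the count $p^{v_p(e)}$ fails this test because $v_p(e) < a$; hence $w(\bar g)$ is not a $p^a$th power in $S_m$, and consequently not a $d$th power since $p^a \mid d$. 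This contradicts the hypothesis, forcing $d \mid e$ and completing the proof.
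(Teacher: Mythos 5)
The paper does not actually prove this lemma; it is imported verbatim from \cite{hanany2020some} (and the hypothesis should be read as ``every image of $w$ on every symmetric group $S_m$''). So there is no in-paper argument to compare against, and your attempt has to stand on its own.

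Your reduction is correct and is the standard one: factor $w=u^e$ with $u$ root-free, use uniqueness of roots in free groups to reduce the claim to $d\mid e$, dispose of $d\le 0$, and derive a contradiction from a single value $w(\bar g)$ violating the cycle-count criterion for $p^a$th powers in $S_m$; your statement and application of that criterion (each contributing cycle of $\sigma$ contributes exactly $p^a$ cycles of a length $k$ with $v_p(k)>0$) are accurate. The problem is that essentially the entire content of the lemma is concentrated in the step you explicitly defer: producing $\bar g\in S_m^{\,n}$ such that $u(\bar g)$ has exactly one $p^a$-cycle while no other cycle of $u(\bar g)$ has length $\ell$ with $\ell/\gcd(\ell,e)=p^{a-v_p(e)}$. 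You rightly observe that the naive realization (a single $p^a$-cycle plus fixed points) can be impossible --- for $u=[x,y]$ and $p=2$ the image lies in $A_m$ --- but you do not supply the construction that circumvents this; gesturing at ``a Schreier/Stallings coset construction'' or at ``image-flexibility results'' in \cite{puder2015measure,hanany2020some} is not a proof, since those results are exactly where this work is done and you never state a precise version you could legitimately invoke. Note also that the obvious parity repair (multiplying by a transposition on fresh points) itself fails in the boundary case $p=2$, $v_2(e)=0$, $a=1$: the repairing $2$-cycle survives $e$th powering and lands precisely in the forbidden length $p^{a-v_p(e)}=2$. So the realization genuinely requires finer control of the cycle statistics of non-power word maps on symmetric groups (Nica-type results, as used in \cite{hanany2020some}). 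As written, the proposal is a correct outline whose central lemma is left unproved.
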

  
\begin{lemma}\cite[Corollary~4.4]{hanany2020some}\label{L2}
    Every root of $w \in F_n$ in $\widehat{F_n}$ belongs to $F_n$.
\end{lemma}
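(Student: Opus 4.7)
The plan is to show that any $y \in \hat{F_n}$ with $y^k = w \in F_n$ for some integer $k \geq 1$ must actually lie in $F_n$. (The case $k = 1$ is trivial, and by replacing $k$ with $-k$ if needed one may assume $k \geq 1$.) If $w = e$, then $y^k = e$; since $\hat{F_n}$ is torsion-free (a classical fact about free profinite groups), $y = e \in F_n$. Otherwise, let $u \in F_n$ be the unique primitive root of $w$, so that $w = u^m$ for some $m \geq 1$ (after adjusting signs by replacing $u$ with $u^{-1}$ if needed).

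The main step is to localize $y$ inside a procyclic subgroup of $\hat{F_n}$. Since $y$ commutes with $y^k = w$, we have $y \in C_{\hat{F_n}}(w)$. I would invoke two standard facts from the theory of free profinite groups: first, that centralizers of non-identity elements in $\hat{F_n}$ are procyclic (due to Herfort--Ribes); and second, that cyclic subgroup separability of $F_n$ (classical, due to M.~Hall) together with centralizer separability yields $C_{\hat{F_n}}(w) = \overline{\langle u \rangle} \cong \hat{\Z}$. Granting these, $y$ lies in $\overline{\langle u \rangle}$, so one may write $y = u^a$ for a unique $a \in \hat{\Z}$.

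The equation $y^k = u^m$ now becomes $u^{ak} = u^m$ in $\hat{F_n}$; since $u$ has infinite order, the exponent map $t \mapsto u^t$ is an injection $\hat{\Z} \hookrightarrow \hat{F_n}$, giving $ak = m$ in $\hat{\Z}$. With $m, k \in \Z$ and $k \geq 1$, the identity $ak = m$ forces $a = m/k \in \Q \cap \hat{\Z} = \Z$: if $a$ had a nontrivial denominator $q$ in lowest terms, then picking a prime $p \mid q$ would make the $p$-adic component of $a$ fail to be a $p$-adic integer. Therefore $k \mid m$ and $y = u^{m/k} \in F_n$, as required.

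The principal difficulty is justifying the two structural inputs about $\hat{F_n}$, namely procyclicity of centralizers and the identification $C_{\hat{F_n}}(w) = \overline{\langle u \rangle}$. These are standard but non-elementary; the latter rests on a separability property asserting that every element of $\hat{F_n}$ commuting with $w$ is a limit of elements of $F_n$ commuting with $w$. Once these ingredients are in hand, the remainder of the argument is a short calculation in $\hat{\Z}$.
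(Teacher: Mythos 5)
The paper does not prove this lemma at all --- it is imported verbatim from the cited reference --- so there is no in-paper argument to compare against; your proof is essentially the standard one from the literature (reduce to the centralizer of $w$, identify $C_{\hat{F_n}}(w)=\overline{\langle u\rangle}\cong\hat{\Z}$ via Herfort--Ribes/Ribes--Zalesskii, then solve $ak=m$ in $\hat{\Z}$), and the arithmetic at the end is correct. The one point deserving emphasis is the one you already flag: procyclicity of the centralizer alone does not suffice, since a procyclic group can contain $\overline{\langle u\rangle}\cong\hat{\Z}$ properly as $m\hat{\Z}$; you genuinely need the separability statement $C_{\hat{F_n}}(w)=\overline{C_{F_n}(w)}=\overline{\langle u\rangle}$ (together with M.~Hall's theorem giving $\overline{\langle u\rangle}\cong\hat{\Z}$), and with those standard inputs cited the argument is complete.
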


\begin{proof}[Proof of Theorem \ref{main1}]
     Lemma \ref{L1} implies that any word having the same image as $w^d$ on every finite group must be of the form $v^d$ for some $v \in F_n$. By Theorem \ref{main}, there exist endomorphisms $\phi_1, \phi_2 \in \mathrm{End}(\widehat{F_n})$ such that $\phi_1(i(w^d)) = i(v^d)$ and $\phi_2(i(v^d)) = i(w^d)$. Applying Lemma \ref{L2}, we conclude that every $d^{\text{th}}$ root of $\phi_1(i(w))^d$ belongs to $F_n$, which gives $\phi_1(i(w)) = i(v).$ Similarly, we obtain $\phi_2(i(v)) = i(w)$. Since $w$ is weakly profinitely rigid, it follows that $w$ and $v$ are endomorphic equivalent.Consequently, $w^d$ and $v^d$ are also endomorphic equivalent, proving that $w^d$ is weakly profinitely rigid.
\end{proof}

\begin{theorem}\label{prim}
    Every surjective word in any finitely generated free group is weakly profinitely rigid.
\end{theorem}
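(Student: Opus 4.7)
The plan is to reduce the claim, via Lemma~\ref{fg}, to showing that $u$ and $w$ induce the same word image on \emph{every} group, not merely on every finite one. Because $w$ is surjective we have $G_w=G$ for every group $G$, and the hypothesis $G_u=G_w$ on every finite $G$ becomes exactly the statement that $u$ is surjective on every finite group. It therefore suffices to upgrade this to: $u$ is surjective on every group $G$; then Lemma~\ref{fg} delivers the endomorphism equivalence.

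The main step is a cyclic subgroup reduction. For any group $G$ and any $g\in G$, form the cyclic subgroup $H=\langle g\rangle$. If $H$ is finite, the hypothesis applied to the finite group $H$ immediately gives $H_u=H$ and hence $g\in H_u\subseteq G_u$. If instead $H$ is infinite cyclic, I compute the image of $u$ on each $\Z/m\Z$: letting $(a_1,\ldots,a_n)$ denote the abelianization of $u$, this image is the subgroup of $\Z/m\Z$ of index $\gcd(a_1,\ldots,a_n,m)$. Surjectivity of $u$ on every $\Z/m\Z$ therefore forces $\gcd(a_1,\ldots,a_n,m)=1$ for every $m\ge 1$; specialising $m$ to primes rules out any prime dividing all of $a_1,\ldots,a_n$, so $\gcd(a_1,\ldots,a_n)=1$. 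Consequently $u$ is surjective on $\Z$, hence on $H\cong\Z$, giving once more $g\in H_u\subseteq G_u$.

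Combining the two cases yields $G_u=G=G_w$ for every group $G$, and Lemma~\ref{fg} finishes the proof. The only place requiring genuine work is the infinite cyclic case, where a family of finite hypotheses (surjectivity of $u$ on each $\Z/m\Z$) must be assembled through the abelianization into a single statement about $\Z$; the finite cyclic case drops out of the hypothesis at once. The conceptual content is that although the hypothesis concerns only finite groups, any single element of an arbitrary group lives in its own cyclic subgroup, which is either a finite group (handled directly) or $\Z$ (handled through its finite cyclic quotients).
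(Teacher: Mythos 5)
Your proof is correct and is essentially the paper's argument: both reduce the problem to showing that a word surjective on every finite group has exponent sums with $\gcd$ equal to $1$ (via finite cyclic quotients) and hence, by B\'ezout, is surjective on every group, after which Lemma~\ref{fg} gives the endomorphism equivalence. Your cyclic-subgroup packaging of the final step is only a cosmetic variant of the paper's direct substitution $x_i\mapsto g^{n_i}$.
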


\begin{proof}
    Let $w \in F_n$. Express $w$ as $w = x_1^{t_1}x_2^{t_2}\ldots x_n^{t_n}c$, where $c$ belongs to the commutator subgroup of $F_n$. We first prove that $w$ is surjective as a word map on every finite group if and only if $\gcd\{t_1,t_2,\ldots,t_n\}=1$. Suppose, for contradiction, that $\gcd\{t_1,t_2,\ldots,t_n\} = r \neq 1$.  If $r \geq 2,$ then for $G = \Z_r$, we get $G_w$ to be trivial, which is not surjective. Similarly, if $t_i = 0$ for $i \in \{1,2,\ldots,n\}$, then for any finite abelian group $G$, the image $G_w$ will be trivial. If $r = 1$, then there exists $d_i \in \Z$ such that $\sum d_it_i = 1$. For any $g \in G$, defining $x_i \mapsto g^{n_i}$ ensures that $w(g^{n_1}, g^{n_2},\ldots,g^{d_n}) = g,$ which implies $G_w = G,$ proving that $w$ is surjective. 
    
    The result now follows from the observation after Definition \ref{def_end} that any two surjective words are endomorphic equivalent.
\end{proof}

Every primitive word is both weakly profinitely rigid and profinitely rigid \cite{puder2015measure}. However, the profinite rigidity of non-primitive surjective words, such as $x_1^2x_2x_1^{-1}x_2^{-1} \in F_2,$ remains an open question.
\begin{cor}
   Let $w \in F_n$ be any surjective word. Then for any integer $d$, the power $w^d$ is weakly profinitely rigid.  
\end{cor}
\begin{proof}
    This follows directly from Theorem \ref{prim} and Theorem \ref{main1}.
\end{proof}

\begin{theorem}\label{main2}
    Let $w_1 \in F_n$ be a test word. If another $w_2 \in F_n$ has the same image as $w_1$ on every finite group, then $w_1$  and $w_2$ induce the same probability measure on every finite group. 
\end{theorem}
\begin{proof}
    By Theorem \ref{main}, we deduce that $i(w_1)$ and $i(w_2)$ are endomorphic equivalent. Let $\sigma_1, \sigma_2 \in \mathrm{End}(\widehat{F_n})$ such that $\sigma_1(i(w_1)) = i(w_2)$ and $\sigma_2(i(w_2)) = i(w_1)$. Then $\sigma_2 \circ \sigma_1$ fixes $i(w_1)$. By Proposition \ref{prof}, it follows that $\sigma_2 \circ \sigma_1$ is an automorphism of $\widehat{F_n}$. It follows that $\sigma_2$ is surjective. Since finitely generated profinite groups are hopfian (see \cite[Proposition~2.5.1]{ribes2000profinite}), it follows that $\sigma_2 $ is an automorphisms $\widehat{F_n}$. Consequently, $\sigma_1 = \sigma_2^{-1} \circ (\sigma_2 \circ \sigma_1)$ is also an automorphism. Therefore, $i(w_1)$ and $i(w_2)$ are automorphic in $\widehat{F_n}$. Hence, by \cite[Theorem~2.2]{hanany2020some}, $w_1$ and $w_2$ induce the same probability measure on every finite group. 
\end{proof}

\begin{theorem}\label{theorem}
     A test word in $F_n$ is weakly profinitely rigid if and only if it is profinitely rigid. 
\end{theorem}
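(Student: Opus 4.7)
The plan is to prove the two implications separately; the forward direction (profinite rigidity $\Rightarrow$ weak profinite rigidity) falls out immediately from Theorem \ref{main2}, while the reverse direction requires one extra observation, namely that finitely generated free groups are Hopfian.

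For the forward direction, I would suppose $w$ is a test word that is profinitely rigid, and take $u \in F_n$ having the same image as $w$ on every finite group. Theorem \ref{main2} applies directly because $w$ is a test word, and yields that $u$ and $w$ induce the same probability measure on every finite group. Profinite rigidity then produces an automorphism of $F_n$ carrying $w$ to $u$, so $w$ and $u$ are in particular endomorphic equivalent. This establishes weak profinite rigidity.

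For the reverse direction, suppose $w$ is a test word that is weakly profinitely rigid, and let $u \in F_n$ induce the same measure as $w$ on every finite group; in particular the two words share the same image on every finite group. Weak profinite rigidity then supplies endomorphisms $\phi_1, \phi_2 \in \mathrm{End}(F_n)$ with $\phi_1(w) = u$ and $\phi_2(u) = w$, so that $\phi_2 \circ \phi_1$ fixes $w$. Because $w$ is a test word, $\phi_2 \circ \phi_1 \in \mathrm{Aut}(F_n)$; consequently $\phi_2$ is surjective. The key final step is to invoke Hopficity of the finitely generated free group $F_n$ to upgrade $\phi_2$ from a surjective endomorphism to a genuine automorphism. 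Then $\phi_2$ is an automorphism sending $u$ to $w$, so $w$ and $u$ are automorphic, which is precisely profinite rigidity.

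I do not anticipate significant obstacles: Theorem \ref{main2} already does the heavy lifting in one direction, and the reverse direction is a short argument using only the definition of a test word together with the classical fact that $F_n$ is Hopfian. The one step worth stating carefully is the passage from ``same measure'' to ``same image,'' which is immediate since the image is the support of the measure.
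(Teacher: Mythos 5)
Your proof is correct and follows essentially the same route as the paper: one direction is an immediate application of Theorem \ref{main2} plus profinite rigidity, and the other uses the test-word property to make $\phi_2\circ\phi_1$ an automorphism and then Hopficity of $F_n$ to upgrade $\phi_2$ to an automorphism. The only cosmetic difference is that you pass from ``same measure'' to ``same image'' directly (image = support of the measure) instead of routing through Theorem \ref{main} and the profinite completion, which is equally valid.
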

\begin{proof}
    Suppose $w_1$ is a test word in $F_n$ that is weakly profinitely rigid. Let $w_2 \in F_n$ be another word that induces the same probability measure as the word $w_1$ on every finite group. Clearly, $w_1$ and $w_2$ have the same image as word maps on every finite group. Since $w_1$ is weakly profinitely rigid, it follows that $w_1$ and $w_2$ are endomorphic equivalent in $F_n$. Let $\sigma_1$ and $\sigma_2$ be two endomorphism of $F_n$ such that $\sigma_1(w_1) = w_2$ and $\sigma_2(w_2) = w_1$. The composition $\sigma_2 \circ \sigma_1$ fixes $w_1$, and since $w_1$ is a test word $w_1$, $\sigma_2 \circ \sigma_1$ must be an automorphism. Consequently, $\sigma_2$ is surjective. Because free groups are hopfian, $\sigma_2$ must be an automorphism, implying that $\sigma_1$ is also an automorphism.  Hence, $w_1$ is profinitely rigid. 

    Conversely, suppose $w_1 \in F_n$ is a profinitely rigid test word. Let $w_2 \in F_n$ be another word such that it has the same image as the word $w_1$ on every finite group. By Theorem \ref{main2}, $w_1$ and $w_2$ induce the same probability measure on every finite group. Profinite rigidity of $w_1$ implies that $w_1$ and $w_2$ are automorphic in $F_n$. Hence, they are endomorphic equivalent. Thus, $w_1$ is weakly profinitely rigid. 
\end{proof}

\begin{theorem}\label{thm}
    Powers of surface words, i.e., $(x_1^2\ldots x_n^2)^d$ in $F_n$ and $([x_1,x_2]\ldots [x_{2n-1},x_{2n}])^d$ in $F_{2n}$ for any integer $d$, are weakly profinitely rigid.
\end{theorem}

As pointed out in \cite[pp.~ 93-94]{puder2015measure}, the profinite rigidity of a word $w \in F_n $ is equivalent to the orbit $\mathrm{Aut}(F_n)w$ being closed in the profinite topology. The next lemma follows from \cite[Corollary~4]{wilton2021profinite} which states that the automorphism orbit of a surface word is closed in the profinite topology and \cite[Remark~6]{wilton2021profinite}.

\begin{lemma}\cite{wilton2021profinite}\label{lem1}
   Powers of surface words, $(x_1^2\ldots x_n^2)^d$ in $F_n$ and $([x_1,x_2]\ldots [x_{2n-1},x_{2n}])^d$ in $F_{2n}$ for any integer $d$, are profinitely rigid.
\end{lemma}

\begin{proof}[Proof of Theorem \ref{thm}]
    H. Zieschang \cite{zieschang1966automorphismen} showed that surface words, i.e., $x_1^2\ldots x_n^2$ in $F_n$ and $[x_1,x_2]\ldots [x_{2n-1},x_{2n}]$ in $F_{2n}$, are test words. Since the power of a test word in a free group is also a test word in the free group, the result follows directly from Lemma \ref{lem1} and Theorem \ref{theorem}.
\end{proof}

 By Lemma \ref{turner2}, every word in $F_2$ is either a power of a surjective word or a test word.  The following corollary can be seen as another characterization of test words in $F_2$.

\begin{cor}
    Let $w \in F_2 \setminus \{e\}$ be a word. If, for every word $u \in F_2$, the condition that $w$ and $u$ have the same image on every finite group implies that they induce the same probability measure on every finite group, then $w$ is a test word. 
\end{cor}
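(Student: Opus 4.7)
The plan is to argue by contrapositive: assuming $w \in F_2 \setminus \{e\}$ is not a test word, I will exhibit $u \in F_2$ with $G_w = G_u$ on every finite group but with $w$ and $u$ inducing distinct measures on some finite group.

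First I put $w$ into a convenient form. Write $w = v^m$ with $v$ not a proper power and $m \in \Z\setminus\{0\}$, and further $v = x_1^{m_1}x_2^{m_2}c$, $c \in F_2'$. Lemma \ref{turner1} asserts that $\gcd(m_1,m_2)\neq 1$ would force $w$ to be a test word; since $w$ is not a test word, $\gcd(m_1,m_2) = 1$. By Theorem \ref{prim}, $v$ is then surjective as a word map on every group, so $G_w = \{g^m : g \in G\}$ for every group $G$. The same formula holds for $s^m$ whenever $s \in F_2$ is surjective, so both $x_1^m$ and $(x_1^2 x_2 x_1^{-1} x_2^{-1})^m$ have the same image on every finite group as $w$.

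I finish by a case split on whether $w$ is automorphic to $x_1^m$. If it is not, I take $u := x_1^m$; the profinite rigidity of $x_1^m$ due to Hanany, Meiri and Puder then directly supplies a finite group on which $u$ and $w$ have distinct measures. If $w$ is automorphic to $x_1^m$, I instead take $u := (x_1^2 x_2 x_1^{-1} x_2^{-1})^m$. To check that this $u$ is not automorphic to $x_1^m$, I invoke the unique $m$-th root property in free groups: an automorphism $\phi$ of $F_2$ with $\phi(u) = x_1^m$ would yield $\phi(x_1^2 x_2 x_1^{-1} x_2^{-1})^m = x_1^m$ and hence $\phi(x_1^2 x_2 x_1^{-1} x_2^{-1}) = x_1$, contradicting the fact (from the introduction's $S_3$-example together with Puder--Parzanchevski) that $x_1^2 x_2 x_1^{-1} x_2^{-1}$ is not automorphic to $x_1$. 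Profinite rigidity of $x_1^m$ then gives a finite group on which $u$ and $x_1^m$ differ in measure, and since $w$ agrees in measure with $x_1^m$ on every finite group, $u$ and $w$ differ in measure on that same group.

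The only ingredient outside the results already stated in the paper is the unique $m$-th root property in $F_2$, which I view as the main technical obstacle; once that is in hand, the argument is a direct assembly of Lemma \ref{turner1}, Theorem \ref{prim}, the $S_3$-example combined with Puder--Parzanchevski, and the Hanany--Meiri--Puder profinite rigidity of $x_1^m$.
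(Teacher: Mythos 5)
Your proof is correct and follows the same basic strategy as the paper: argue by contrapositive, use Turner's classification (Lemma \ref{turner1}) to write a non-test word as $v^m$ with $v$ surjective, observe that the image of $v^m$ on any group is exactly the set of $m$\ts{th} powers, and then compare against the $m$\ts{th} power of another surjective word using the Hanany--Meiri--Puder profinite rigidity of $x_1^n$. Where you genuinely improve on the paper is in the final step. The paper's proof simply picks ``a surjective word $u$ not automorphic to $w_1$'' and asserts, citing \cite{hanany2020some}, that $w_1^d$ and $u^d$ must then differ in measure on some finite group; but that assertion for \emph{arbitrary} non-automorphic surjective words is essentially an instance of the open conjecture, not something \cite{hanany2020some} provides. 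What \cite{hanany2020some} gives is profinite rigidity of the specific words $x_1^n$, so one of the two words being compared must be (automorphic to) $x_1^m$. Your case split handles exactly this: if $w$ is not automorphic to $x_1^m$ you compare with $x_1^m$ directly, and if it is, you compare with $(x_1^2x_2x_1^{-1}x_2^{-1})^m$ and transfer the measure discrepancy through the automorphism. The auxiliary facts you need are all available: unique extraction of roots in free groups is standard (centralizers in free groups are cyclic), so it is not really an ``obstacle''; and $x_1^2x_2x_1^{-1}x_2^{-1}$ is not automorphic to $x_1$ already because automorphic words induce identical measures on every finite group and these two differ on $S_3$ (you do not even need Puder--Parzanchevski for that step). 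In short, your argument is a corrected and fully justified version of the paper's sketch.
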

\begin{proof}
     It suffices to consider non-trivial non-test words by Theorem \ref{main2}. We shall show that for a non-trivial non-test word $w$ in $F_2$, we get another word $u$ in $F_2$ such that $w$ and $u$ have the same image on every finite group but induce different probability measures on some finite group.
     
     If $w$ is a non-trivial non-test word in $F_2$, then it is a power of some surjective word $w_1$, say $w = w_1^d$. A primitive word can never be automorphic to a non-primitive surjective word. Hence, we have a surjective word $u_1$ that is not automorphic to $w_1$, such that $w = w_1^d$ and $u=u_1^d$ will have the same image on every finite group. We take $u_1$ such that only one of $w_1$ and $u_1$ is primitive. Using \cite{hanany2020some}, there exists a finite group $G$ on which $w$ and $u$ will not induce the same probability measure.
\end{proof}
\section*{Acknowledgement}
The author is thankful to Prof Doron Puder for his interest in this work and for Lemma \ref{lem1}. The author also thanks to Prof Amber Habib, Prof Krishnendu Gongopadhyay and Dr Sumit Chandra Mishra for their careful reviews and suggestions to improve the article. The author is grateful to the anonymous reviewer for their valuable feedback, which enhanced the readability of this article.

\bibliographystyle{siam}
\bibliography{bib}
\end{document}